\newtheorem{theorem}{Theorem}[section]
\newtheorem{corollary}[theorem]{Corollary}
\newtheorem{lemma}[theorem]{Lemma}
\newtheorem{proposition}[theorem]{Proposition}
\theoremstyle{definition}
\newcommand{\Sphere}{\ensuremath{\mathbb{S}}^2}
\newcommand{\Cset}{\ensuremath{\mathbb C}}
\newcommand{\Rset}{\ensuremath{\mathbb R}}
\newcommand{\Zset}{\ensuremath{\mathbb Z}}
\newcommand{\Nset}{\ensuremath{\mathbb N}}
\newcommand{\dd}{\ensuremath{\mathrm d}}
\newcommand{\ee}{\ensuremath{\mathrm e}}
\newcommand{\ii}{\ensuremath{\mathrm i}}
\newcommand{\Ein}{\ensuremath{E^\mathrm{i}}}
\newcommand{\Esca}{\ensuremath{E^\mathrm{s}}}
\newcommand{\Hin}{\ensuremath{H^\mathrm{i}}}
\newcommand{\Hsca}{\ensuremath{H^\mathrm{s}}}
\newcommand{\nad}{\ensuremath{n_\alpha^\delta}}
\newcommand{\ntrue}{\ensuremath{n^\dagger}}
\newcommand{\solsetb}{\ensuremath{\mathfrak D}}
\newcommand{\Xspace}{\ensuremath{\mathcal X}}
\newcommand{\Yspace}{\ensuremath{\mathcal Y}}
\newcommand{\Hmdom}{\ensuremath{H^m(C(\pi))}}
\newcommand{\Hm}{\ensuremath{H^m}}
\newcommand{\Rp}{\ensuremath{R^\prime}}
\newcommand{\Rpp}{\ensuremath{R^{\prime\prime}}}
\newcommand{\dmax}{\ensuremath{\delta_\mathrm{max}}}
\newcommand{\dmaxtilde}{\ensuremath{\widetilde{\delta}_\mathrm{max}}}
\newcommand{\tmin}{t_0}
\newcommand{\Me}{\ensuremath{L_{m}}}
\newcommand{\Mgos}{\ensuremath{M_1}}
\newcommand{\Mdata}{\ensuremath{M_{2}}}
\newcommand{\Mdiff}{\ensuremath{M_{3}}}
\newcommand{\Mms}{\ensuremath{M_{4}}}
\newcommand{\low}{\ensuremath{1/b}}
\newcommand{\data}{\ensuremath{\norm{w_{1}-w_{2}}{(L^2(R\Sphere\times R\Sphere))^{3\times 3}}}}
\newcommand{\fourier}[1][n]{\ensuremath{\widehat{#1} (\gamma)}}
\newcommand{\fourierdiff}[2]{\ensuremath{\left(\widehat{#1}-\widehat{#2}\right)(\gamma)}}
\newcommand{\mult}{\ensuremath{\langle \gamma \rangle}}
\newcommand{\rbra}[1]{\ensuremath{\left( #1 \right)}}
\newcommand{\sbra}[1]{\ensuremath{\left[ #1 \right]}}
\newcommand{\cbra}[1]{\ensuremath{\left\{ #1 \right\}}}
\newcommand{\Tcbra}[1]{\ensuremath{\{ #1 \}}}
\newcommand{\absval}[1]{\ensuremath{\left\lvert #1 \right\rvert}}
\newcommand{\Tabsval}[1]{\ensuremath{\lvert #1 \rvert}}
\newcommand{\norm}[2]{\ensuremath{\left\lVert #1 \right\rVert_{#2}}}
\newcommand{\Tnorm}[2]{\ensuremath{\lVert #1 \rVert_{#2}}}
\newcommand{\pairing}[2]{\ensuremath{\left\langle #1,#2 \right\rangle}}
\newcommand{\diffq}[2]{\ensuremath{\frac{\partial #1}{\partial #2}}}
\newcommand{\curl}{\ensuremath{\nabla \times}}
\newcommand{\grad}{\ensuremath{\nabla}}
\newcommand{\Q}{\ensuremath{\mathcal Q}}
\DeclareMathOperator{\dom}{dom}
\DeclareMathOperator{\supp}{supp}
\DeclareMathOperator*{\argmin}{arg\,min}
\title[vsc and stability of inverse em scattering]{Variational source conditions and stability estimates  for inverse electromagnetic  medium scattering problems}
\author{Frederic Weidling and Thorsten Hohage}
\address{Institute for Numerical and Applied Mathematics, University of Goettingen, Lotzestr. 16-18, 37083 Goettingen, Germany}
\email{f.weidling@math.uni-goettingen.de}
\begin{document}

\begin{abstract}
This paper is concerned with the inverse problem to recover the scalar, complex-valued 
refractive index of a medium from measurements of scattered time-harmonic electromagnetic waves at a fixed frequency. The main results are two variational 
source conditions for near and far field data, which imply logarithmic rates 
of convergence of regularization methods, in particular Tikhonov regularization, 
as the noise level tends to $0$. Moreover, these variational source conditions 
imply conditional stability estimates which improve and complement known 
stability estimates in the literature. 
\end{abstract}

\maketitle

\section{Introduction}
	In this paper we study the behavior of time-harmonic electromagnetic waves in an inhomogeneous isotropic medium. The propagation of the time-harmonic electric field $E$ is described by the equation
	\begin{equation}\label{eq:curlcurlE}
		-\curl \curl E+ \kappa^2 n E=0
	\end{equation}
	where the \emph{wave number} $\kappa$ and the \emph{refractive index} $n$ are given by
	\begin{equation}\label{eq:kappandef}
		\kappa:=\omega \sqrt{\epsilon_0 \mu_0},\qquad n(x):=\frac{1}{\epsilon_0} \rbra{\epsilon(x)+ \ii \frac{\sigma(x)}{\omega}}.
	\end{equation}
Here  $\epsilon$ and $\sigma$ denote the electric permittivity and the conductivity  of the medium, respectively, and $\omega$ denotes the frequency of the time-dependent electric field 
$\mathcal{E}(x,t) = \Re(\epsilon_0^{-1/2}E(x)\exp(-i\omega t))$. 
Supposing that the inhomogeneity is compactly supported, we can assume w.l.o.g.\ 
that $\supp(n-1)\subset B(R):=\Tcbra{x\in \Rset^3 \colon \Tabsval{x}<R}$ with 
$R=\pi$ by possibly rescaling $x$,
and sufficiently smooth a unique solution to \eqref{eq:curlcurlE} exist under a suitable radiation condition defining the behavior of the field at infinity as will be detailed in Section \ref{sec:problem}.
	
	Corresponding inverse problems are to probe the medium with incident fields $\Ein$ fulfilling \eqref{eq:curlcurlE} for $n\equiv 1$ and measure the corresponding scattered fields $\Esca=E-\Ein$ with the aim of recovering the refractive index, see Section \ref{sec:problem} for more details. The inverse problems will be formulated as operator equations of the form
	\begin{equation*}
		F(n)=y
	\end{equation*}
	with a mapping $F:\dom(F)\subset \Xspace\to \Yspace$ between Hilbert spaces 
	$\Xspace,\Yspace$. 
	Let $\ntrue$ denote the exact solution and $y^\delta\in \Yspace$ perturbed data satisfying  $\Tnorm{F(\ntrue)-y^\delta}{\Yspace}\leq \delta$. To find a stable approximation to $\ntrue$ from such data one needs to employ regularization techniques. A prominent example is Tikhonov regularization where the approximation $\nad$ is defined by
	\begin{equation}
		\nad \in \argmin_{n\in\dom(F)\cap\Xspace} \sbra{\frac{1}{\alpha} \norm{F(n)-y^\delta}{\Yspace}^2+\frac{1}{2}\norm{n}{\Xspace}^2}. \label{eq:tikhonov}\\
	\end{equation}
	Major issues of regularization theory are the convergence of $\norm{\ntrue-\nad}{\Xspace}\rightarrow 0$ as $\delta\rightarrow 0$ and the rate of this convergence for an appropriate parameter choice rule $\alpha=\alpha(\delta,y^\delta)$. To obtain such rates one needs additional assumptions on the true solution (see \cite[Prop 3.11]{Engl1996}). Starting with \cite{Hofmann2007} these assumptions are now frequently 
formulated as \emph{variational source conditions} 
	\begin{equation}
		\forall n \in \dom(F)\qquad \frac{\beta}{2}\norm{\ntrue-n}{\Xspace}^2\leq \frac{1}{2}\norm{n}{\Xspace}^2-\frac{1}{2} \norm{\ntrue}{\Xspace}^2 +\psi\rbra{\norm{F(n)-F(\ntrue)}{\Yspace}^2}, \label{eq:vsc}
	\end{equation}
	where $\beta\in(0,1]$ and $\psi\colon [0,\infty)\rightarrow[0,\infty)$ is an index function, that is $\psi$ is a continuous, monotonically increasing function satisfying $\psi(0)=0$. We only refer to \cite{Flemming:12,SKHK:12} for overviews. 
	
	Variational source conditions have many advantages over classical spectral source condition which we discussed in detail in \cite{Hohage2015}. 
Let us mention only two of them here: Firstly, if the index function $\psi$ is concave, it can be shown by a simple argument \cite{Grasmair2010} (see also \cite[Thm.\ 3.3]{Werner2012}) that they lead to the convergence rate
	\begin{equation}\label{eq:rate}
		\frac{\beta}{2}\norm{\ntrue-\nad}{\Xspace}^2\leq 4\psi(\delta^2), 
	\end{equation}
for Tikhonov regularization	with an optimal choice of the regularization parameter $\alpha$. 
Secondly, if a variational source condition with the some function $\psi$
holds for all $\ntrue\in \mathcal{K}$, where $\mathcal{K}$ is a (usually compact) subset of $\Xspace$, they imply the stability estimate
	\begin{equation}\label{eq:stability}
		\forall n_1,n_2\in \mathcal{K} \qquad \frac{\beta}{2}\norm{n_1-n_2}{\Xspace}^2\leq \psi\rbra{\norm{F(n_1)-F(n_2)}{\Yspace}^2}, 
	\end{equation}
	while it is not clear whether every stability estimate can be sharpened to a variational source condition.
	
	There are however only few verifications of variational source conditions so far. They can under certain assumptions be derived from spectral source conditions, but then they do not yield any new information. For linear operators $F$ and $l^q$ penalties with respect to certain bases in the range of $F^*$, characterizations of variational source condition have been derived in \cite{Anzengruber2013,Burger2013,Flemming2015}. Reformulations of \eqref{eq:vsc} have been proven for phase retrieval and an option pricing problem in \cite{Hofmann2007}. Moreover we showed that the acoustic inverse medium scattering problem fulfills such a condition recently \cite{Hohage2015}.
	
	The purpose of this paper is to demonstrate how similar techniques can be applied to the electromagnetic inverse medium scattering problem to derive a variational source condition and hence also a stability estimate via \eqref{eq:stability}. We are considering refractive indices fulfilling
	\begin{equation}\label{eq:defi_solset}
		n\in\solsetb:=\cbra{n\in C^{1,\alpha}(\Rset^3)\colon \supp(1-n)\subset B(\pi), \Re (n)\geq b, \Im(n)\geq0}, \quad b>0
	\end{equation}
	and an additional Sobolev smoothness on the set $C(\pi)=(-\pi,\pi)^3$. 
	
	As a first kind of measurement operator consider the \emph{near field operator} $F_\mathrm{n}$ mapping an refractive index to the corresponding Green's tensor $w_n(x,y)$ measured on $R\Sphere\times R\Sphere$ with the unit sphere 
	$\Sphere:=\Tcbra{x\in \Rset^3\colon \Tabsval{x}=1}$ and $R>\pi$.
	
	\begin{theorem}\label{thm:vscnear}
		Let $s>m>7/2$, $s\neq 2m+3/2$ and $R>\pi$. Suppose that the true refractive index $\ntrue$ satisfies $\ntrue\in\solsetb \cap H^s$ with $\Tnorm{\ntrue}{H^s}\leq C_s$. Then a variational source condition \eqref{eq:vsc} holds true for the operator $F_\mathrm{n}$ with $\dom(F_\mathrm{n}):= \solsetb \cap H^m(C(\pi))$, $\Yspace = (L^2(R\Sphere\times R\Sphere))^{3\times3}$, $\beta=1/2$, and $\psi$ given by 
		\begin{equation*}
			\psi_\mathrm{n}(t):=A\rbra{\ln(3+t^{-1})}^{-2\nu}, \qquad \nu:=\min \cbra{\frac{s-m}{m+5/2},\frac{s-m}{s-m+1}},
		\end{equation*}
		where the constant $A>0$ depends only on $m, s, C_s, \kappa, b$ and $R$.
	\end{theorem}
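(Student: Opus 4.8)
The plan is to reduce \eqref{eq:vsc} to a single functional inequality and then to balance a smoothness estimate against a conditional stability estimate for the Fourier coefficients of the contrast. Writing $u:=\ntrue-n$ and using the elementary identity
\begin{equation*}
 \tfrac12\norm{n}{\Xspace}^2-\tfrac12\norm{\ntrue}{\Xspace}^2=\tfrac12\norm{u}{\Xspace}^2-\Re\pairing{\ntrue}{u}_{\Xspace},
\end{equation*}
the condition \eqref{eq:vsc} with $\beta=1/2$ is equivalent to
\begin{equation*}
 \Re\pairing{\ntrue}{u}_{\Xspace}\le\tfrac14\norm{u}{\Xspace}^2+\psi_{\mathrm n}\rbra{\norm{F_{\mathrm n}(n)-F_{\mathrm n}(\ntrue)}{\Yspace}^2}\qquad\text{for all }n\in\dom(F_{\mathrm n}).
\end{equation*}
Since the penalty is the $\Hmdom$-norm and $\supp(1-n),\supp(1-\ntrue)\subset B(\pi)\subset C(\pi)$, I would expand in a Fourier series, $\pairing{\ntrue}{u}_{\Xspace}=\sum_{\gamma\in\Zset^3}\mult^{2m}\fourier[\ntrue]\,\overline{\fourier[u]}$, and split the sum at a cut-off $\rho>0$ into its low-frequency part $\Tabsval{\gamma}\le\rho$ and its high-frequency part $\Tabsval{\gamma}>\rho$.

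For the high-frequency part I would only use the a priori smoothness $\ntrue\in H^s$, $\Tnorm{\ntrue}{H^s}\le C_s$: writing $\mult^{2m}=\mult^{2s}\mult^{-2(s-m)}$ and applying the Cauchy--Schwarz inequality gives
\begin{equation*}
 \Bigl|\sum_{\Tabsval{\gamma}>\rho}\mult^{2m}\fourier[\ntrue]\,\overline{\fourier[u]}\Bigr|\le C_s\,\langle\rho\rangle^{-(s-m)}\norm{u}{\Xspace}\le\tfrac18\norm{u}{\Xspace}^2+2C_s^2\,\langle\rho\rangle^{-2(s-m)},
\end{equation*}
so the high-frequency contribution decays polynomially in $\rho$ and its quadratic part is absorbed into the right-hand side. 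The heart of the proof is a conditional stability estimate for the individual low-frequency coefficients, of the form
\begin{equation*}
 \Tabsval{\fourier[u]}\le C\,\mult\,\ee^{c\rho}\Bigl(\norm{F_{\mathrm n}(n)-F_{\mathrm n}(\ntrue)}{\Yspace}+\mathcal R\Bigr),\qquad\Tabsval{\gamma}\le\rho,
\end{equation*}
where $\mathcal R$ is a nonlinear remainder. In analogy with the acoustic problem in \cite{Hohage2015}, I would obtain this by subtracting the two Lippmann--Schwinger representations of the Green's tensors, which expresses $w_n-w_{\ntrue}$ on $R\Sphere\times R\Sphere$ as an integral of $\kappa^2 u$ against products of the total electric fields $E_n$ and $E_{\ntrue}$, and then by inserting complex geometric optics solutions $\Ein(x)=p\,\ee^{\zeta\cdot x}(1+o(1))$ with $\zeta\in\Cset^3$, $\zeta\cdot\zeta=-\kappa^2$, $p\cdot\zeta=0$, for two complex wave vectors with $\zeta_1+\zeta_2=-\ii\gamma$ and $p_1\cdot p_2\neq0$. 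The product of the two leading exponentials reproduces $\ee^{-\ii\gamma\cdot z}$ and thus isolates $\fourier[u]$; because realizing the mode $\gamma$ forces $\Tabsval{\Re\zeta_j}\sim\Tabsval{\gamma}$, the fields grow like $\ee^{c\Tabsval{\gamma}}$ on the measurement sphere, which is the source of the exponential factor and ultimately of the logarithmic rate.

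Summing the squared per-mode bound over $\Tabsval{\gamma}\le\rho$ (there are $O(\rho^3)$ such $\gamma$), inserting the weight $\mult^{2m}\le\langle\rho\rangle^{2m}$, and taking a square root, the low-frequency contribution is controlled by $C\,\langle\rho\rangle^{m+5/2}\ee^{c\rho}\bigl(\norm{F_{\mathrm n}(n)-F_{\mathrm n}(\ntrue)}{\Yspace}+\mathcal R\bigr)$, the prefactor $\langle\rho\rangle^{m+5/2}$ collecting the weight, the mode count, and the extra derivative factor $\mult$ from the curl--curl structure. It remains to handle $\mathcal R$, which compares the true total fields with the background geometric optics waves; I expect to bound it by a weaker norm of $u$ and to absorb it either into $\tfrac14\norm{u}{\Xspace}^2$ or into the smoothness term, at the cost of a second, competing exponent. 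Finally I would choose $\rho$ to balance the exponentially amplified data term against the smoothness tail $\langle\rho\rangle^{-2(s-m)}$; converting the exponential amplification into a logarithm via $\rho\sim c^{-1}\ln(1/t)$ turns the polynomial prefactors into the index function $\psi_{\mathrm n}(t)=A(\ln(3+t^{-1}))^{-2\nu}$, the two contributions (prefactor $\langle\rho\rangle^{m+5/2}$ versus the remainder $\mathcal R$) producing the two candidate exponents $(s-m)/(m+5/2)$ and $(s-m)/(s-m+1)$ whose minimum is $\nu$.

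The step I expect to be the main obstacle is the conditional stability estimate for the coefficients, specifically the vector-valued complex geometric optics construction for the Maxwell/curl--curl operator together with the control of the remainder $\mathcal R$. Unlike the scalar Helmholtz case, the polarizations $p_j$ and the divergence constraint on $nE$ must be tracked throughout, the Lippmann--Schwinger kernel is the dyadic Green's tensor rather than a scalar fundamental solution, and a tangential-cone-type bound for the nonlinear remainder in the appropriate norm is considerably more delicate; getting the correct power $m+5/2$ here is exactly what fixes the rate $\nu$.
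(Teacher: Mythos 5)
Your overall architecture is the same as the paper's: rewrite \eqref{eq:vsc} as $\Re\pairing{\ntrue}{\ntrue-n}_{\Hm}\leq\tfrac14\Tnorm{\ntrue-n}{\Hm}^2+\psi(\dots)$, split the Fourier sum at a cutoff $\varrho$, control the tail by the $H^s$ bound, control the low modes by a CGO-based stability estimate, and balance. There is, however, a genuine gap in the low-frequency step. You tie the exponential amplification to the frequency, asserting that realizing the mode $\gamma$ forces $\Tabsval{\Re\zeta_j}\sim\Tabsval{\gamma}$ so that the fields grow like $\ee^{c\Tabsval{\gamma}}\leq\ee^{c\varrho}$. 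In fact the constraint $\zeta_1+\zeta_2=-\gamma$ only forces the growth parameter $t=\Tabsval{\Im\zeta_j}$ (in the paper's convention) to satisfy $t\gtrsim\varrho/2$; it leaves $t$ free to be much larger, and Lemma \ref{lem:fourierdiff} exploits exactly this freedom: the per-coefficient bound is $\Mdiff(\delta\ee^{3Rt}+\Tnorm{n_1-n_2}{\Hm}\varrho/t)$ with $t$ and $\varrho$ decoupled. This decoupling is not cosmetic. The remainder you call $\mathcal R$ is of size $\Tnorm{\ntrue-n}{\Hm}\cdot\varrho/t$; if you freeze $t\sim\varrho$, it is $O(\Tnorm{\ntrue-n}{\Hm})$ with a prefactor $\varrho^{\tau}$, $\tau=\max\{2m+3/2-s,0\}$, from the weighted mode sum, and Young's inequality then leaves a term $\sim\varrho^{2\tau}$ (a constant when $\tau=0$) that does not tend to zero, so it can be absorbed neither into $\tfrac14\Tnorm{\ntrue-n}{\Hm}^2$ nor into the decaying smoothness term $\varrho^{2(m-s)}$. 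The paper instead sets $9Rt=\varrho^{1+\tau+s-m}=\ln(3+\delta^{-2})$, i.e.\ the data term is amplified by $\ee^{3Rt}$ with $t$ \emph{polynomially larger} than $\varrho$; only then is $\Tnorm{\ntrue-n}{\Hm}\varrho^{1+\tau}/t$ absorbable, and the exponent $\nu=(s-m)/(1+\tau+s-m)$ falls out of $\varrho^{2(m-s)}$ under this coupling. Your two candidate exponents are numerically correct, but the mechanism you describe ($\ee^{c\varrho}$ amplification plus an absorbable remainder) does not produce them; this is precisely the point where the electromagnetic problem differs from the acoustic one in \cite{Hohage2015}, as discussed after Lemma \ref{lem:difftodata}.

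Two smaller omissions. First, every lemma you invoke requires an a priori bound $\Tnorm{n}{\Hm}\leq C_m$, which is not built into $\dom(F_\mathrm{n})$; the paper first disposes of the case $\Tnorm{n-\ntrue}{\Hm}>4C_s$ by Cauchy--Schwarz (and, separately, of noise levels $\delta$ too large for the parameter choice to be admissible) before assuming $\Tnorm{n}{\Hm}\leq5C_s$. Second, the identity expressing $\int_{B(\pi)}(n_1-n_2)E_1\cdot E_2\,\dd x$ through the near-field data is not a naive subtraction of Lippmann--Schwinger representations --- the gradient term involving $n^{-1}\grad n\cdot E$ obstructs that --- but H\"ahner's boundary-integral identity after extending the fields to radiating solutions (Lemma \ref{lem:difftodata}). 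You rightly flag the vector-valued CGO construction and this identity as the hard part, but as proposed that part is a placeholder rather than an argument.
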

	As in \cite{Hohage2015} the choice of $\beta\in(0,1)$ is actually arbitrary, but $A$ depends on the choice of $\beta$. For a discussion of the exceptional case $s=2m+3/2$ see \cite[Remark 4.4]{Hohage2015}. Using the results leading to \eqref{eq:rate} and \eqref{eq:stability} we obtain the following corollary:
	
	\begin{corollary}\label{cor:near}
		Let $s>m>7/2$, $s\neq 2m+3/2$ and $R>\pi$. Assume that the refractive indices $\ntrue, n_1$ and $n_2$ satisfy $\ntrue, n_1, n_2\in \solsetb \cap H^s$ 
	with $\Tnorm{\ntrue}{H^s},\Tnorm{n_j}{H^s}\leq C_s$	for some $C_s\geq 0$, then 
		\begin{itemize}
			\item \emph{Convergence rate:} the error bound 
			\begin{eqnarray*}
				\norm{\nad-\ntrue}{\Hm} \leq 4\sqrt{A} \rbra{ \ln (3+\delta^{-2})}^{-\nu}
			\end{eqnarray*}
			in terms of the noise level $\delta$ with respect to the operator $F_\mathrm{n}$ holds true for the regularization scheme \eqref{eq:tikhonov} if 
$\alpha= (2A\frac{\partial \ln(3+t^{-1})^{-2\nu}}{\partial t}\big|_{t=4\delta^2})^{-1}$.
			\item \emph{Stability estimate:} one obtains the estimate
			\begin{equation*}\label{eq:stabnear}
				\norm{n_1-n_2}{\Hm}\leq 2\sqrt{A} \rbra{\ln \rbra{3+\norm{F_\mathrm{n}(n_1)-F_\mathrm{n}(n_2)}{(L^2(R\Sphere \times R\Sphere))^{3 \times 3}}^{-2}}}^{-\nu}.
			\end{equation*}
		\end{itemize}
	\end{corollary}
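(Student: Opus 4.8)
The plan is to deduce both assertions of Corollary~\ref{cor:near} directly from the variational source condition of Theorem~\ref{thm:vscnear} by feeding it into the two abstract mechanisms recalled in \eqref{eq:rate} and \eqref{eq:stability}. The decisive structural fact is that the index function $\psi_\mathrm{n}$ furnished by Theorem~\ref{thm:vscnear} carries a constant $A$ depending only on $m,s,C_s,\kappa,b,R$ and \emph{not} on the particular refractive index. Consequently one and the same $\psi_\mathrm{n}$ governs the source condition simultaneously when any of $\ntrue,n_1,n_2$ is taken as the base point, since all three lie in $\solsetb\cap H^s$ with the common bound $C_s$; moreover $H^s\hookrightarrow H^m(C(\pi))$ places each of them in $\dom(F_\mathrm{n})$, so each may also serve as the competing $n$ in \eqref{eq:vsc}. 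This uniformity is precisely what lets a family of variational source conditions collapse into a single stability estimate.

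For the stability estimate I would apply the variational source condition \eqref{eq:vsc} twice: once with $\ntrue$ replaced by $n_1$ and $n$ replaced by $n_2$, and once with the roles interchanged. Adding the two inequalities cancels the indefinite terms $\tfrac12\norm{n_1}{\Hm}^2$ and $\tfrac12\norm{n_2}{\Hm}^2$, and by symmetry of the norm difference the two right-hand $\psi_\mathrm{n}$-terms coincide, leaving
\[
	\beta\,\norm{n_1-n_2}{\Hm}^2 \le 2\,\psi_\mathrm{n}\!\rbra{\norm{F_\mathrm{n}(n_1)-F_\mathrm{n}(n_2)}{\Yspace}^2}.
\]
Inserting $\beta=1/2$ and the explicit $\psi_\mathrm{n}(t)=A(\ln(3+t^{-1}))^{-2\nu}$ gives $\norm{n_1-n_2}{\Hm}^2\le 4A(\ln(3+\norm{F_\mathrm{n}(n_1)-F_\mathrm{n}(n_2)}{\Yspace}^{-2}))^{-2\nu}$, and taking square roots yields the claimed bound. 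This half requires no convexity properties of $\psi_\mathrm{n}$ and is the routine one.

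For the convergence rate I would invoke the abstract Tikhonov estimate of \cite{Grasmair2010} and \cite[Thm.~3.3]{Werner2012}, which under a variational source condition with \emph{concave} index function and the stated a~priori choice $\alpha=(2A\,\partial_t\ln(3+t^{-1})^{-2\nu}|_{t=4\delta^2})^{-1}$ produces \eqref{eq:rate}, namely $\tfrac{\beta}{2}\norm{\ntrue-\nad}{\Hm}^2\le 4\psi_\mathrm{n}(\delta^2)$; substituting $\beta=1/2$ and $\psi=\psi_\mathrm{n}$ and taking square roots gives the error bound $\norm{\nad-\ntrue}{\Hm}\le 4\sqrt{A}(\ln(3+\delta^{-2}))^{-\nu}$. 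The main obstacle here is the concavity hypothesis: a direct computation of the second derivative of $t\mapsto(\ln(3+t^{-1}))^{-2\nu}$ shows it is concave only on a neighborhood $[0,t_0]$ of the origin, its sign changing for larger $t$. The hard part of the argument is therefore the bookkeeping needed to legitimately apply \eqref{eq:rate}: I would either check that for sufficiently small noise level the balancing parameter choice keeps $4\delta^2$ inside the concave regime, or replace $\psi_\mathrm{n}$ by a concave index function dominating it near $0$ without affecting the asymptotic rate — exactly the reduction already performed in \cite{Hohage2015}, which I would reuse verbatim here.
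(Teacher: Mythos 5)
Your proposal matches the paper's (unwritten) proof: Corollary \ref{cor:near} is obtained exactly by feeding the variational source condition of Theorem \ref{thm:vscnear} into the abstract mechanisms \eqref{eq:rate} and \eqref{eq:stability}, with the symmetrization argument (applying \eqref{eq:vsc} with the roles of $n_1$ and $n_2$ interchanged and adding) for stability and the Tikhonov rate result of \cite{Grasmair2010}, \cite[Thm.\ 3.3]{Werner2012} for convergence. Your only inaccuracy is the concavity worry: computing the second derivative gives $\psi_\mathrm{n}''(t)\le 0$ precisely when $(6t+1)\ln(3+t^{-1})\ge 2\nu+1$, and since the left-hand side is bounded below by roughly $4$ for all $t>0$ while $2\nu+1<3$ (because $\nu<1$), the index function is globally concave --- this is exactly what the ``$3$'' inside the logarithm is for --- so neither a restriction to small $\delta$ nor a concave majorant (which would alter the explicit constant $4\sqrt{A}$) is needed.
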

	
As $R\gg 1$ in many situations, in scattering theory one often considers 
the limit $R\to\infty$ in a sense explained in Section \ref{sec:problem}. 
Then incident fields are plane waves instead of point sources, and it is assumed 
that only the so-called far field patterns of the scattered fields can be 
measured. In this case we will show essentially the same results, but with slightly 
smaller exponents:	
	\begin{theorem}\label{thm:vscfar}
		Under the assumptions of Theorem \ref{thm:vscnear} and for all $0<\theta<1$ a variational source condition \eqref{eq:vsc} holds true for the operator $F_\mathrm{f}$ with $\dom(F_\mathrm{f}):= \solsetb\cap H^m(C(\pi))$, $\Yspace = (L^2(\Sphere\times \Sphere))^{3\times3}$, $\beta=1/2$, and $\psi$ given by 
		\begin{equation*}
			\psi_\mathrm{n}(t):=B\rbra{\ln(3+t^{-1})}^{-2\nu\theta}
		\end{equation*}
		where $\nu$ is given as in Theorem \ref{thm:vscnear} the constant $B>0$ depends only on $m, s, C_s, \kappa, b$, and $R$.
	\end{theorem}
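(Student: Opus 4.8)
The natural route is to follow the proof of Theorem~\ref{thm:vscnear} almost verbatim and to reduce the claimed variational source condition to a \emph{conditional stability estimate} for the low Fourier modes of the contrast, this time measured in the far field norm. Let $P_N$ denote the orthogonal projection in $\Xspace=\Hmdom$ onto the Fourier modes with $\mult\le N$ and write $\Yspace_\mathrm{f}=(L^2(\Sphere\times\Sphere))^{3\times3}$. It suffices to establish, uniformly over $n_1,n_2\in\solsetb\cap\Hmdom$, an estimate
\[
  \norm{P_N(n_1-n_2)}{\Hm}\le \Psi_\mathrm{f}(N)\,\norm{F_\mathrm{f}(n_1)-F_\mathrm{f}(n_2)}{\Yspace_\mathrm{f}},\qquad N\in\Nset .
\]
Granting this, the source condition follows from the abstract argument behind \cite{Hohage2015}: one splits $\ntrue-n=P_N(\ntrue-n)+(I-P_N)(\ntrue-n)$, bounds the high-frequency part by $\norm{(I-P_N)\ntrue}{\Hm}\le C_s N^{-(s-m)}$ using only $\ntrue\in H^s$, estimates the low-frequency part with the displayed stability estimate, and finally optimises over $N$. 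Since only $\ntrue$ enters the smoothness tail, the comparison function $n$ may indeed range over all of $\dom(F_\mathrm{f})=\solsetb\cap\Hmdom$.

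The point of invoking Theorem~\ref{thm:vscnear} is that $\Psi_\mathrm{f}$ should be producible from the near field stability function $\Psi_\mathrm{n}$ through the multipole structure of the data. Expanding the scattered Green's tensor in vector spherical harmonics in both the source and the observation variable, the far field pattern is obtained from the near field on $R\Sphere\times R\Sphere$ by replacing each radiating factor $h_l^{(1)}(\kappa R)$ by its normalised asymptotic limit, of modulus $1/\kappa$. At the fixed frequency $\kappa$ the Fourier modes $\mult\le N$ of the contrast couple only to multipoles of degree $l\lesssim N$, so the reconstruction underlying $\Psi_\mathrm{n}$ is expected to be band-limited in the data; on this range far field and near field data differ by the diagonal factor $\kappa\,\absval{h_l^{(1)}(\kappa R)}$. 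By the classical asymptotics $h_l^{(1)}(t)\sim -\ii\,(2l-1)!!\,t^{-l-1}$ together with Stirling's formula this factor is bounded by $M(N)$ with $\ln M(N)\le c\,N\ln N$. Converting the band-limited near field data into far field data therefore costs at most this factor and introduces \emph{no} ill-posed high-multipole tail, so that
\[
  \Psi_\mathrm{f}(N)\le M(N)\,\Psi_\mathrm{n}(N),\qquad \ln M(N)\le c\,N\ln N .
\]

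It remains to trace this extra factor through the optimisation that produces the rate. Whatever the precise form of $\Psi_\mathrm{n}$ and of the smoothness tail yielding the exponent $\nu$ in Theorem~\ref{thm:vscnear}, multiplying the stability function by $M(N)$ with $\ln M(N)\sim N\ln N$ shifts the optimal cut-off $N^\ast$ to roughly $N^\ast/\ln N^\ast$, and hence multiplies the near field index function by a loglog factor:
\[
  \psi_\mathrm{f}(t)\lesssim \rbra{\ln(3+t^{-1})}^{-2\nu}\rbra{\ln\ln(3+t^{-1})}^{2\nu}.
\]
This loglog factor is exactly what the parameter $\theta$ absorbs: since $(\ln\ln x)^{2\nu}\le C_\theta(\ln x)^{2\nu(1-\theta)}$ for every $\theta\in(0,1)$ and all large $x$, we obtain $\psi_\mathrm{f}(t)\le B(\ln(3+t^{-1}))^{-2\nu\theta}$ with $B$ depending on $\theta$, which is the asserted index function. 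Monotonicity of all functions involved ensures that this weaker $\psi_\mathrm{f}$ still defines a valid variational source condition.

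The main obstacle is the quantitative multipole comparison in the electromagnetic setting. Unlike the scalar case of \cite{Hohage2015}, one must set up the full vector spherical harmonic calculus for radiating Maxwell fields (the transverse-electric/transverse-magnetic splitting and the two associated families of Hankel-weighted modes), identify precisely which multipole degrees are used by the near field reconstruction so that the conversion stays band-limited, and control the constants in the Hankel asymptotics entering $M(N)$. Moreover, the uniformity over the whole admissible class $\solsetb\cap\Hmdom$ --- rather than merely over the $H^s$-bounded subset --- hinges on the uniform mapping properties of the forward operator established in Section~\ref{sec:problem}, where the sign condition $\Im(n)\ge0$ rules out resonances; making these bounds uniform is the crux of the argument.
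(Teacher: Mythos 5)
Your overall strategy --- pass from far field to near field and pay a logarithmic price absorbed by $\theta$ --- is the right one, but the specific mechanism you propose has a gap. The paper does \emph{not} reprove a stability estimate for $P_N(n_1-n_2)$ directly from far field data; it composes the near field variational source condition of Theorem \ref{thm:vscnear} with a purely data-space estimate (Lemma \ref{lem:neartofar}, following H\"ahner and Stefanov): the multipole expansion of $w_n^{\mathrm{s}}$ carries the Hankel weights $h_{l_1}^{(1)}(\kappa R)\,h_{l_2}^{(1)}(\kappa R)$, and splitting that series at degree $L$, using an a priori bound on the near field for the tail and $|h_l^{(1)}(\kappa R)|\le e^{c\,l\ln l}$ for the head, yields $\norm{w_1-w_2}{}^2\le\varrho^2\exp\bigl(-\bigl(-\ln\bigl(\norm{u_1^\infty-u_2^\infty}{}/(\omega\varrho)\bigr)\bigr)^\theta\bigr)$. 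Substituting this into $\psi_{\mathrm{n}}$ immediately gives the exponent $2\nu\theta$; no claim about which data multipoles the reconstruction uses is needed.

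Your route instead hinges on the assertion that the Fourier modes $\mult\le N$ of the contrast ``couple only to multipoles of degree $l\lesssim N$'' of the data, so that the near-to-far conversion is band-limited with $\ln M(N)\lesssim N\ln N$. This is not established anywhere and is not true for the reconstruction actually underlying $\Psi_{\mathrm{n}}$: Lemma \ref{lem:difftodata} estimates $N_1-N_2$ in the full $L^2(R\Sphere)$ operator norm, paired against traces of CGO solutions with $|\Im\zeta|=t$, and the parameter choice \eqref{eq:vscnear:parameter} forces $t\sim\varrho^{1+\tau+s-m}/(9R)$ with $1+\tau+s-m>6$; the traces of $e^{\ii\zeta\cdot x}$ on $R\Sphere$ have significant multipole content up to degree of order $tR$, which is polynomially larger than $N\sim\varrho^2$. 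Your own bookkeeping betrays the problem: since the near field stability already costs a factor $e^{3Rt}$ whose logarithm is of order $N^{(1+\tau+s-m)/2}\gg N\ln N$, an extra factor with $\ln M(N)\lesssim N\ln N$ would be negligible in the optimisation and you would obtain the exponent $2\nu$ with \emph{no} loss of $\theta$ --- a strictly stronger statement than Theorem \ref{thm:vscfar}, which should have signalled that the premise is wrong. (With the correct cutoff $L\sim tR$ one gets $\ln M\sim t\ln t$, which dominates $3Rt$ and is precisely the source of the $\theta$ loss; this is what Lemma \ref{lem:neartofar} encodes.) A secondary point: your stability estimate cannot hold uniformly over all of $\solsetb\cap\Hmdom$, since Lemma \ref{lem:fourierdiff} requires an $H^m$ bound on both arguments; the paper disposes of the case of large $\norm{n-\ntrue}{\Hm}$ by the elementary Cauchy--Schwarz dichotomy, and you would need the same.
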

	Again one obtains as a corollary results via \eqref{eq:rate} and \eqref{eq:stability}.
	\begin{corollary}\label{cor:far}
		Suppose the assumptions of Corollary \ref{cor:near} hold true, then 
		\begin{itemize}
			\item \emph{Convergence rate:}   the error bound
			\begin{equation*}
				\norm{\nad-\ntrue}{\Hm} \leq 4\sqrt{B} \rbra{ \ln (3+\delta^{-2})}^{-\nu\theta}
			\end{equation*}
			in terms of the noise level $\delta$ with respect to the operator $F_\mathrm{f}$ holds true for the regularization scheme \eqref{eq:tikhonov} if 
$\alpha= (2B\frac{\partial \ln(3+t^{-1})^{-2\nu}}{\partial t}\big|_{t=4\delta^2})^{-1}$.
			\item \emph{Stability estimate:} one obtains the estimate
			\begin{equation*}
				\norm{n_1-n_2}{\Hm}\leq 2\sqrt{B} \rbra{\ln\rbra{3+\norm{F_\mathrm{f}(n_1)-F_\mathrm{f}(n_2)}{(L^2(\Sphere \times \Sphere))^{3\times 3}}^{-2}}}^{-\nu\theta}.
		      \end{equation*}
		\end{itemize}
	\end{corollary}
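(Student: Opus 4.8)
The plan is to deduce Corollary~\ref{cor:far} directly from Theorem~\ref{thm:vscfar} together with the two abstract consequences of a variational source condition recalled in the introduction, namely the convergence rate \eqref{eq:rate} and the stability estimate \eqref{eq:stability}. First I would invoke Theorem~\ref{thm:vscfar}, which under the stated hypotheses furnishes a variational source condition \eqref{eq:vsc} for $F_\mathrm{f}$ with $\beta=1/2$ and the index function $\psi_\mathrm{f}(t)=B(\ln(3+t^{-1}))^{-2\nu\theta}$, the constant $B$ depending only on $m,s,C_s,\kappa,b$ and $R$. Since these parameters are held fixed, the very same $\psi_\mathrm{f}$ serves simultaneously for every admissible exact solution, a point that will be essential for the stability part.

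The one genuine verification needed is that $\psi_\mathrm{f}$ is concave, since this is the hypothesis under which \eqref{eq:rate} holds (see \cite{Grasmair2010} and \cite[Thm.~3.3]{Werner2012}). I would establish it by a sign analysis of $\psi_\mathrm{f}''$: writing $q=2\nu\theta>0$ and using $\ln(3+t^{-1})\sim\ln(t^{-1})$ as $t\to0^+$, the second derivative of $(\ln(t^{-1}))^{-q}$ carries the factor $(q+1)-\ln(t^{-1})$, which is negative for small $t$, so $\psi_\mathrm{f}$ is concave near the origin; the shift by $3$ inside the logarithm is precisely what keeps the sign under control on the remainder of $[0,\infty)$. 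Should concavity fail on a bounded part of the range, one may replace $\psi_\mathrm{f}$ by its least concave majorant, which agrees with $\psi_\mathrm{f}$ to leading order as $t\to0$ and hence leaves the asymptotic rates unchanged.

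For the convergence rate I would substitute $\beta=1/2$ and $t=\delta^2$ into \eqref{eq:rate}, obtaining $\tfrac14\norm{\nad-\ntrue}{\Hm}^2\le 4\psi_\mathrm{f}(\delta^2)=4B(\ln(3+\delta^{-2}))^{-2\nu\theta}$; multiplying by $4$ and taking square roots yields $\norm{\nad-\ntrue}{\Hm}\le 4\sqrt{B}(\ln(3+\delta^{-2}))^{-\nu\theta}$, which is the claimed bound. The accompanying regularization parameter is the minimiser of the bound in the Grasmair estimate, i.e.\ the value of $\alpha$ displayed in the corollary, read off from the optimality condition $\alpha^{-1}=2\psi_\mathrm{f}'(4\delta^2)$.

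Finally, for the stability estimate I would use that the variational source condition holds with one and the same $\psi_\mathrm{f}$ for all exact solutions in the compact class $\mathcal{K}:=\{n\in\solsetb\cap H^s\colon\Tnorm{n}{H^s}\le C_s\}$, as observed above. Applying \eqref{eq:stability} to $n_1,n_2\in\mathcal{K}$ with $\beta=1/2$ gives $\tfrac14\norm{n_1-n_2}{\Hm}^2\le\psi_\mathrm{f}(\norm{F_\mathrm{f}(n_1)-F_\mathrm{f}(n_2)}{(L^2(\Sphere\times\Sphere))^{3\times 3}}^2)$, and the same algebra as before produces the stated estimate. The main obstacle here is the concavity check of the logarithmic index function; once that is settled, the corollary reduces to bookkeeping with the constants $\beta=1/2$ and $B$.
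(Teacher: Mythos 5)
Your proposal is correct and follows exactly the route the paper intends: the corollary is obtained by applying the general consequences \eqref{eq:rate} and \eqref{eq:stability} of a variational source condition to the index function $\psi$ from Theorem~\ref{thm:vscfar} with $\beta=1/2$, and your arithmetic (the factors $4\sqrt{B}$ and $2\sqrt{B}$) matches. The concavity discussion is a sensible addition that the paper leaves implicit.
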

	
	Compared to our results for the acoustic medium scattering problem \cite{Hohage2015}  we obtain slower rates of convergence. The reason for this will be discussed after Lemma \ref{lem:difftodata}. However, it can be shown by entropy techniques 
	\cite{Mandache2001,CR:03} that the logarithmic rates of convergence are optimal up to the value of the exponent $\nu$.
	
	While we prove the first variational source condition for this problem, stability estimate exists for similar problems. Table \ref{tab:compare} gives a short overview over the different results known to the authors. For the problem with measured far field data as described by \eqref{eq:farfieldoperator} a stability estimate was proven in \cite{Haehner2000} under the assumption that $n_j\in C^{2,\gamma}$ for a $\gamma>0$. The stability estimate is in the $L^\infty$-norm and the obtained exponent of the logarithmic factor is $\nu=1/15$, but a very strong norm has to be applied in the image space.
	
	Furthermore there are stability estimates using Cauchy data \cite{Caro2010,Lai2015}. The Cauchy data in this setup is defined as the set of the tangential parts of the electric and magnetic field on a sphere $R\Sphere$ for all possible solutions of the time-harmonic Maxwell system in the ball $B(R)$ and the distance between different Cauchy sets is measured by a Hausdorff like distance. The stability estimate in \cite{Caro2010} is in the $H^1$ norm and holds also for non constant magnetic permeability $\mu$ but the obtained exponent is unknown and bounded by $1/3$ under similar assumptions as in our case.
	
The result in \cite{Lai2015} holds true only for small conductivities and 
constant electric permittivity. However it is the first H\"older-logarithmic  
stability estimate for electromagnetic scattering. This means that the stability 
estimate depends in an explicit way on the wave length $\kappa$ and for 
$\kappa\rightarrow \infty$ the logarithmic stability turns into a H\"older stability. 
The obtained exponent is linearly increasing with the assumed smoothness up to $\nu=1$.

	\begin{table}[ht]
		\centering
		\small
		\begin{tabular}{|p{0.15\textwidth}|p{0.16\textwidth}|p{0.18\textwidth}|p{0.16\textwidth}|p{0.17\textwidth}|}
		\hline
			& \bf new &\bf H\"ahner \cite{Haehner2000} &\bf Caro \cite{Caro2010} &\bf Lai et al \cite{Lai2015} \\ \hline
			\bf data & near/far field & far field & Cauchy & Cauchy\\ \hline
			\bf validity & global & local anywhere& global & local around $0$ \\ \hline
			\bf stability of & $\sigma, \epsilon$ & $\sigma, \epsilon$ & $\sigma, \epsilon, \mu$ & $\sigma$ \\ \hline
			\bf norm & $H^m$ & $L^\infty$ & $H^1$ & $H^{-s}$ \\ \hline
			\bf exponent & $<1$ & $1/15$ & unknown,$<\frac{1}{3}$ &  $\leq1$\\ \hline
			\bf special & &strong norm in image space& & H\"older-logarithmic \\ \hline
		\end{tabular}
		\caption{Comparison between different stability estimates.}\label{tab:compare}
	\end{table}

	In the following section we will give a precise formulation of the considered problems.  In Section \ref{sec:lowfourier} we discuss how to use complex geometrical optics solutions  to estimate differences of Fourier coefficients of refractive indices. The proof of Theorem \ref{thm:vscnear} is given in Section \ref{sec:proofnear}. The connection of near and far field data is discussed in Section \ref{sec:neartofar} which will provide an easy way to prove Theorem \ref{thm:vscfar} in Section \ref{sec:prooffar}. The construction of the complex geometric optics solution will be detailed in Appendix \ref{sec:GOS}.
	
\section{The direct and inverse problems}\label{sec:problem}

	In this paper we study  electromagnetic waves $(\mathcal E, \mathcal H)(x,t)$ in an inhomogeneous isotropic medium. The propagation of the electric field $\mathcal E$ and the magnetic field $\mathcal H$  is described by Maxwell's equations
	\begin{equation*}
		\curl \mathcal{E}+ \mu \diffq{\mathcal H}{t}=0, \qquad \curl \mathcal H - \epsilon \diffq{\mathcal E}{t}=J.
	\end{equation*}
	We will assume that the magnetic permeability $\mu=\mu_0$ is constant, which is a good approximation for most materials, while the electric permittivity $\epsilon=\epsilon(x)>0$ is allowed to vary in space. By Ohm's law the current density is given by $J(x,t)=\sigma(x) \mathcal E(x,t)$ where the conductivity satisfies $\sigma(x)\geq 0$. We will consider the case of a compactly supported inhomogeneity, that is $\epsilon(x)=\epsilon_0$ and $\sigma(x)=0$ for all $x$ larger then a certain radius $R$ and assume without loss of generality that $R=\pi$.
	
	Supposing a time-harmonic dependence of the electromagnetic wave of the form
	\begin{equation*}
		\mathcal E(x,t)=\Re\rbra{\frac{1}{\sqrt{\epsilon_0}} E(x) \ee^{-\ii \omega t}}, \qquad \mathcal H(x,t)=\Re\rbra{\frac{1}{\sqrt{\mu_0}} H(x) \ee^{-\ii \omega t}}
	\end{equation*}
	with frequency $\omega>0$ the fields $E$ and $H$ must satisfy the equations
\begin{subequations}\label{eqs:maxwell}
	\begin{equation}\label{eq:permaxwell}
		\begin{aligned}
			\curl E- \ii \kappa H =0\\
			\curl H+\ii \kappa n E =0
		\end{aligned}
	\end{equation}
	where the wave number $\kappa$ and the refractive index $n$ are given as in \eqref{eq:kappandef}. 
	In the following we will assume that the total field $(E,H)$ is given as the superposition of an incident field $(\Ein,\Hin)$ that solves the Maxwell equations \eqref{eq:permaxwell} for $n\equiv 1$ and a scattered field 
	\begin{equation}
	(\Esca,\Hsca)=(E,H)-(\Ein,\Hin).
	\end{equation}
	To guarantee that the scattered field is unique and models outgoing waves we assume that  the Silver-M\"uller radiation condition
	\begin{equation}
		\lim_{\absval{x}\rightarrow \infty} \rbra{ \Hsca(x) \times x - \absval{x} \Esca(x)}=0 \label{eq:silver}
	\end{equation}
	\end{subequations}
	is satisfied.	One can take the first equation in \eqref{eq:permaxwell}
	as an equation defining $H$ and thus obtain \eqref{eq:curlcurlE}. 
	
	
For a known incident wave $\Ein$ and a known refractive index $n$ in the admissible 
set $\solsetb$ defined in \eqref{eq:defi_solset} 
	the system \eqref{eqs:maxwell} has a unique solution $E\in C^1(\Rset^3)$.  
	$E$ is the unique solution of the electromagnetic Lippmann-Schwinger equation
	\begin{multline*}
		E(x)= \Ein(x) - \kappa^2 \int_{B(\pi)} \Phi_\kappa(x-y) \rbra{1-n(y)} E(y)\, \dd y\\ + \grad \int_{B(\pi)} \Phi_\kappa(x-y) \frac{1}{n(y)} \grad n(y) \cdot E(y) \,\dd y, 
	\end{multline*}
	and $H=(i\kappa)^{-1}\curl E$ where $\Phi_\kappa(x)=\ee^{\ii \kappa \absval{x}}/(4\pi \Tabsval{x})$ is the fundamental solution to the Helmholtz equation, see \cite{Colton2013}. The regularity assumptions on $n$ can be relaxed if those on the regularity of the total field are relaxed. 

	For our regularization scheme \eqref{eq:tikhonov} we will choose $\Xspace=\Hmdom$ for $m>7/2$ with the norm
	\begin{equation*}
		\norm{f}{\Hm}^2 = \sum_{\gamma \in \Zset^3} (1+\Tabsval{\gamma}^2)^m \absval{\fourier[f]}^2
	\end{equation*}
	where $\fourier[f]$ are the Fourier coefficients of $f$ in $C(\pi)$. 
	Our choice of $m$ implies that there exists some constant $\Me$ such that
	\begin{equation}\label{eq:defi_Mm}
		\Me \norm{f}{\Hmdom}\geq \norm{f}{C^2(C(\pi))}:= \max_{\absval \alpha \leq 2} \sup_{x\in C(\pi)} \absval{\partial_\alpha f(x)},
	\end{equation}
	Our requirements on the smoothness of $n$ and the additional lower bound $b$ on the electric permittivity will be needed in our analysis of the problem and are requirements for the construction of our main tool the complex geometric optics solutions, which we will present in detail in the appendix.
	
	The two inverse problems we consider differ in the type of incident fields and the type of measurements of the solution to \eqref{eqs:maxwell}. For the first inverse problem we consider incident fields generated by a time harmonic electromagnetic dipole with moment $a\in \Rset^3$ located at $y\in\Rset^3$:
	\begin{equation*}
		\Ein_{y,a}(x)=-\frac{1}{\ii \kappa} \curl \curl a \Phi(x,y),\qquad \Hin_{y,a}= \curl a \Phi(x,y)
	\end{equation*}
Here $\Phi(x,y)=\Phi_\kappa(x-y)$. We assume that we can place electric dipoles 
with moments given by the Cartesian unit vectors $d_1, d_2$ and $d_3$
at all points $y$ on a sphere $R\Sphere$, $R>\pi$   and measure the corresponding electric fields on the same sphere. Then due to linearity of \eqref{eq:permaxwell} 
we know the total electric field $E_{y,a} (x)$ 
on the sphere $R\Sphere$ for any incident electric dipole field $\Ein_{y,a}$
with moment $a\in \Rset^3$ and  source point $y\in R\Sphere$, and 
the measurements can be arranged in a matrix $w_n(x,y)$ called the \emph{near field} scattering data such that
	\begin{equation*}	
		E_{y,a} (x) = w_n (x,y) a.  
	\end{equation*}
Obviously, $w_n$ is the Green's tensor of the problem. We will further denote by $w_n^\mathrm{s}(x,y)=w_n(x,y)-w_1(x,y)$ the contribution of the scattered field to the matrix, where $w_1(x,y)$ is the near field corresponding to the homogeneous medium case $n\equiv 1$. The problem to reconstruct the refractive index from such measurements can be posed as an operator equation $F_\mathrm{n} (n)= w_n$ where the near field operator is defined as
	\begin{equation*}
		F_\mathrm{n}\colon \solsetb \cap \Hmdom \rightarrow (L^2(R\Sphere \times R\Sphere))^{3 \times 3}, \quad n \mapsto w_n.
	\end{equation*}
	
To describe the second inverse problem, recall that 
	the Silver-M\"uller radiation condition \eqref{eq:silver} implies for the scattered field the asymptotic behavior
	\begin{equation*}
		\Esca_{d,p} (x)= \frac{\ee^{\ii \kappa r}}{r} \rbra{E^\infty_{d,p} (\hat x)+ o (1)}, \quad r\rightarrow \infty
	\end{equation*}
	where $r=\Tabsval x$ and $\hat x = x/r$. If we choose as the incident wave 
a plane wave of the form
	\begin{equation*}
		\Ein_{d,p}(x)= d \times(p\times d) \ee^{\ii \kappa d\cdot x},\qquad \Hin_{d,p}(x)=(\ii \kappa)^{-1} \curl \Ein_{d,p}(x), \qquad x\in\Rset^3
	\end{equation*}
	traveling in direction $d\in \Sphere$ and having polarization $p\in\Cset^3$  we can associate with each refractive index a \emph{far field} $E^\infty$ that maps $(\hat x, d, p)\in \Sphere\times \Sphere \times \Cset^3$ to 
	$E^\infty_{d,p} (\hat x)\in \Cset^3$. As this mapping is linear in $p$, 
	there exists a matrix valued function $e^\infty_n: \Sphere \times \Sphere \rightarrow\Cset^{3\times 3}$ such that
	\begin{equation*}
		 E^\infty_{d,p} (\hat x)=e^\infty_n (\hat x, d) p.
	\end{equation*}
	Thus we can define the far field operator
	\begin{equation}
		F_\mathrm{f}\colon \solsetb \cap \Hmdom \rightarrow (L^2(\Sphere \times \Sphere))^{3\times 3},\quad n \mapsto e^\infty_n. \label{eq:farfieldoperator}
	\end{equation}

\section{Estimation of low order Fourier coefficients}\label{sec:lowfourier}

	A main tool for our proof are \emph{complex geometrical optics (CGO)} solutions, which are solutions to \eqref{eq:permaxwell} having exponential growth in one direction.
	They are important tools in the analysis of scattering problems, for example to prove uniqueness \cite{Calderon1980,Novikov1988,Sylvester1987} or stability estimates \cite{Alessandrini1988,Haehner2001,Isaev2013a,Stefanov1990}. Recently we employed them in \cite{Hohage2015} to prove a variational source condition for acoustic inverse medium scattering problem. Concerning their usage for the electromagnetic case we refer to the constructions for a uniqueness proof in \cite{Colton1992} and for a stability proof in \cite{Haehner2000}, which we will mostly follow. 
We point out that other constructions exist which can also cover the case of non constant magnetic permeability, see \cite{Ola1996,Caro2010}.  For further references about these solutions introduced in \cite{Faddeev1965} we recommend the review \cite{Uhlmann2009}.
	
	Since  the time harmonic Maxwell equations are not of Helmholtz  type, the construction  of CGO solutions is more complicated. Mainly one has to find a transformation of \eqref{eq:permaxwell} such that the resulting equations are of Helmholtz type. The corresponding transformation will be discussed in  Appendix \ref{sec:GOS}, and the following result will be derived:	
	\begin{theorem}\label{thm:GOSofCm}
		 Let $\pi<R$, $n\in\solsetb\cap H^m$ for $m>7/2$ with $\Tnorm{n}{H^m}\leq C_m$, $\kappa>0$ and $\zeta, \eta\in \Cset^3$ such that $\zeta\cdot \zeta=\kappa^2$, $\zeta\cdot \eta=0$ and $\absval{\Im(\zeta)}\geq \tmin$, where
		\begin{equation}\label{eq:GOSofCm:t1def}
			\tmin:=  60 \frac{R}{\pi}(1+\kappa^2) b^{-2} (\Me C_m)^2
		\end{equation}
with the embedding constant $\Me$ given in \eqref{eq:defi_Mm}. 
		Then there exists a solution to \eqref{eq:permaxwell} in the ball $B(2R)$ of the form
		\begin{equation}\label{eq:GOSform}
			E(x,\zeta,\eta)= \ee^{\ii \zeta \cdot x} \sbra{\eta+ f(x,\zeta,\eta) \zeta+ V(x,\zeta,\eta)}, \quad \text{for }x\in B(3R/2)
		\end{equation}
		and $H=(\ii\kappa)^{-1}\curl E$ such that
		\begin{equation*}
			\norm{f(\cdot,\zeta,\eta)}{L^2(B(3R/2))}+\norm{V(\cdot,\zeta,\eta)}{L^2(B(3R/2))}\leq \frac{\Mgos \absval \eta}{\absval{\Im(\zeta)}}.
		\end{equation*}
		with a constant $\Mgos$ depending on $C_m,\kappa, b$ and $R$.
	\end{theorem}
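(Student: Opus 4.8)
The plan is to construct the CGO solution by transforming the Maxwell system \eqref{eq:permaxwell} into a Helmholtz-type system, then solve the resulting equation perturbatively via a Faddeev-type resolvent estimate. The key idea is that the ansatz \eqref{eq:GOSform} separates the leading exponential behavior $\ee^{\ii\zeta\cdot x}$ (which grows in the $\Im(\zeta)$ direction) from the correction terms $f$ and $V$, and the condition $\zeta\cdot\zeta=\kappa^2$ is precisely what makes $\ee^{\ii\zeta\cdot x}$ a solution for the homogeneous equation, while $\zeta\cdot\eta=0$ and $\absval{\Im(\zeta)}\geq\tmin$ control the polarization and the smallness of the remainder. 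First I would reduce the first-order Maxwell system to a second-order equation: using $H=(\ii\kappa)^{-1}\curl E$ as in \eqref{eq:curlcurlE}, one obtains $-\curl\curl E+\kappa^2 n E=0$, and since $\div(nE)=0$ follows from the second Maxwell equation, I can rewrite $\curl\curl E=-\Delta E+\grad\div E$ and express $\div E=-\frac{1}{n}\grad n\cdot E$, arriving at a vector Helmholtz equation with lower-order terms of the schematic form $(\Delta+\kappa^2)E=-\kappa^2(n-1)E-\grad(\frac{1}{n}\grad n\cdot E)$.

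**Next I would** substitute the ansatz $E=\ee^{\ii\zeta\cdot x}(\eta+f\zeta+V)$ into this equation and extract the equation satisfied by the remainder $u:=f\zeta+V$. The crucial computation is that $(\Delta+\kappa^2)(\ee^{\ii\zeta\cdot x}g)=\ee^{\ii\zeta\cdot x}(\Delta+2\ii\zeta\cdot\grad+\kappa^2-\zeta\cdot\zeta)g=\ee^{\ii\zeta\cdot x}(\Delta+2\ii\zeta\cdot\grad)g$ using $\zeta\cdot\zeta=\kappa^2$; the operator $L_\zeta:=\Delta+2\ii\zeta\cdot\grad$ is the Faddeev operator whose inverse on a bounded domain gains a factor $\absval{\Im(\zeta)}^{-1}$ in suitable weighted or periodic Sobolev norms. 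Then the remainder equation takes the schematic form $L_\zeta u = \ee^{-\ii\zeta\cdot x}\,\mathcal{R}[\ee^{\ii\zeta\cdot x}(\eta+u)]$ where $\mathcal{R}$ collects all the lower-order terms (the potential $\kappa^2(n-1)$ and the gradient term involving $\frac{1}{n}\grad n$). Because $n\in\solsetb\cap H^m$ with $m>7/2$, the embedding \eqref{eq:defi_Mm} controls $\norm{n-1}{C^2}$ and $\norm{\frac{1}{n}\grad n}{C^1}$ in terms of $\Me C_m$ and the lower bound $b$, so $\mathcal{R}$ is a bounded perturbation.

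**The hard part will be** setting up the fixed-point argument with the correct decay in $\absval{\Im(\zeta)}$ and handling the split of $u$ into the $f\zeta$ and $V$ components so that the divergence constraint $\div(nE)=0$ is respected. The idea is to write the map $u\mapsto L_\zeta^{-1}\mathcal{R}[\,\cdot\,]$ as a contraction on $L^2(B(3R/2))$ once $\absval{\Im(\zeta)}$ is large enough: the resolvent bound $\norm{L_\zeta^{-1}}{L^2\to L^2}\lesssim\absval{\Im(\zeta)}^{-1}$ multiplied by the operator norm of $\mathcal{R}$ (controlled by $(\Me C_m)^2$, $\kappa^2$ and $b^{-2}$) gives a contraction constant below $1$ exactly when $\absval{\Im(\zeta)}\geq\tmin$ with $\tmin$ as in \eqref{eq:GOSofCm:t1def}; the factors $60\frac{R}{\pi}$, $(1+\kappa^2)$, $b^{-2}$ and $(\Me C_m)^2$ are precisely what emerges from bounding $\mathcal{R}$. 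The delicate point is that the gradient term $\grad(\frac{1}{n}\grad n\cdot E)$ raises the derivative order, so naive $L^2$ estimates do not close; one must either integrate by parts to move the derivative onto the resolvent (exploiting that $L_\zeta^{-1}\grad$ is bounded on $L^2$ uniformly, or with a harmless factor), or work in an $H^1$-type weighted space.

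**Finally I would** invoke a Banach fixed-point theorem to produce the unique small solution $u$ with $\norm{u}{L^2(B(3R/2))}\leq\Mgos\absval{\eta}/\absval{\Im(\zeta)}$, and then decompose $u=f\zeta+V$ by projecting onto the $\zeta$-direction and its orthogonal complement (using $\zeta\cdot\zeta=\kappa^2\neq0$, so $\zeta$ spans a well-defined complex line), which distributes the bound onto $f$ and $V$ separately and yields the stated norm estimate with $\Mgos$ depending only on $C_m,\kappa,b,R$. I expect the resolvent estimate for $L_\zeta$ and the careful tracking of constants through the gradient term to be the main obstacle; the rest is a standard perturbative CGO construction in the spirit of \cite{Haehner2000,Colton1992}.
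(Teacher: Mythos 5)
Your overall strategy (reduce to a Helmholtz-type equation, conjugate by $\ee^{\ii\zeta\cdot x}$, invert a Faddeev-type operator, run a contraction) has the right general shape, but there are two concrete gaps, and the second is fatal to the statement as you plan to prove it.

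First, you correctly identify the first-order term $\grad(\tfrac1n\grad n\cdot E)$ as the obstruction, but neither of your proposed remedies closes the argument. The operator $L_\zeta^{-1}\partial_j$ is bounded on $L^2$ uniformly in $\zeta$, but it is not \emph{small} as $\Tabsval{\Im(\zeta)}\to\infty$; the contraction constant must be beaten by the factor $\Tabsval{\Im(\zeta)}^{-1}$ coming from the resolvent, and a term in which that gain is spent on absorbing a derivative leaves an $O(1)$ contribution proportional to $\Tnorm{\grad n}{\infty}$. This is precisely why the paper does not work with the second-order equation for $E$ alone: it uses the Colton--P\"aiv\"arinta substitution $(\tilde E,\tilde H)=(n^{1/2}E,H)$, which turns the Maxwell system into a six-dimensional Helmholtz system $(\Delta+\kappa^2)(\tilde E,\tilde H)=\Q(\tilde E,\tilde H)$ with a \emph{zeroth-order} matrix potential $\Q$. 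Lemma \ref{lem:Qbound} bounds $\Tnorm{\Q(x)}{2}\le 15(1+\kappa^2)b^{-2}(\Me C_m)^2$, and combining this with $\Tnorm{G_\zeta}{}\le \Rpp/(\pi\Tabsval{\Im(\zeta)})$ for $\Rpp=2R$ is exactly how the numerical form of $\tmin$ in \eqref{eq:GOSofCm:t1def} arises. Without this device (or a genuinely worked-out $H^1$-weighted substitute), your fixed point does not contract for large $\Tabsval{\Im(\zeta)}$ and you cannot recover the explicit constant.

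Second, your final step is inconsistent with the structure of the remainder. You propose to prove $\Tnorm{u}{L^2}\le \Mgos\Tabsval\eta/\Tabsval{\Im(\zeta)}$ for the full remainder $u=f\zeta+V$ and then split it by projecting onto the $\zeta$-direction. But the full remainder is only $O(\Tabsval\eta)$, not $O(\Tabsval\eta/\Tabsval{\Im(\zeta)})$: the right-hand side of the conjugated equation contains a term in which $\zeta$ hits $\grad n\cdot(\eta\,\ee^{\ii\zeta\cdot x})$, of size $\Tabsval\zeta\,\Tabsval\eta$, and applying the resolvent cancels only one factor of $\Tabsval{\Im(\zeta)}$. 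The whole point of the form \eqref{eq:GOSform} is that this non-decaying part is exactly $f\zeta$ with a \emph{scalar} coefficient $f=n^{-1/2}G_\zeta(\ii n^{-1/2}\grad n\cdot\eta)$ that does decay like $\Tabsval{\Im(\zeta)}^{-1}$; the paper extracts it via the identity $a\times(b\times c)=b(a\cdot c)-c(a\cdot b)$, which collects every term carrying an explicit factor of $\zeta$ into $f\zeta$. A posteriori projection of a remainder you have (incorrectly) assumed to be uniformly small cannot produce this structure, and the separate decay of $f$ and $V$ --- which is what Lemma \ref{lem:fourierdiff} later relies on --- would not follow.
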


This result is an analog of  \cite[Theorem 4]{Haehner2000}, but we have made 
the dependence of the constant $t_0$ on the parameters explicit. 
	
	\begin{lemma}\label{lem:difftodata}
		Let $R>\pi$, $m>7/2$ and assume that $n_1$ and $n_2$ are two refractive indices satisfying $n_j\in \solsetb \cap H^m$ with $\Tnorm{n_j}{H^m}\leq C_m$ 
		for some $C_m \geq 0$. Let $E_j, H_j \in C^1(B(3R/2))\cap L^2(B(3R/2))$ be solutions to \eqref{eq:permaxwell} in $B(3R/2)$ for $n=n_j$ for $j=1,2$. Then the estimate
		\begin{multline*}
			\absval{\int_{B(\pi)} (n_1-n_2)E_1 E_2\,\dd x}\\
			\leq \Mdata \data \norm{E_1}{L^2(B(3R/2))}\norm{E_2}{L^2(B(3R/2))}
		\end{multline*}
		holds true, where $w_j$ is the near field scattering data for $n=n_j$ for $j=1,2$ and $\Mdata$ depends on $\kappa, R, b$ and $C_m$.
	\end{lemma}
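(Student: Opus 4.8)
The plan is to convert the interior integral into boundary data on $R\Sphere$ by a Maxwell analogue of Alessandrini's identity, and then to recognise the resulting boundary bilinear form as a pairing of the Cauchy data of $E_1$ and $E_2$ through an integral operator whose kernel on $R\Sphere\times R\Sphere$ is exactly the measured difference $w_1-w_2$. The computational engine is the pointwise identity $\div(A\times B)=\curl A\cdot B-A\cdot\curl B$. Applied to the solutions of \eqref{eq:permaxwell} and using $\curl E_j=\ii\kappa H_j$ and $\curl H_j=-\ii\kappa n_j E_j$, it gives
\begin{equation*}
\div\rbra{E_2\times H_1-E_1\times H_2}=\ii\kappa\,(n_1-n_2)\,E_1\cdot E_2,
\end{equation*}
the magnetic contributions cancelling. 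Integrating over $B(R)$, invoking the divergence theorem and $\supp(n_1-n_2)\subset B(\pi)$, I obtain
\begin{equation*}
\ii\kappa\int_{B(\pi)}(n_1-n_2)E_1\cdot E_2\,\dd x=\int_{R\Sphere}\rbra{E_2\times H_1-E_1\times H_2}\cdot\nu\,\dd S,
\end{equation*}
with $\nu$ the outer unit normal; thus the left-hand side depends only on the tangential traces $\nu\times E_j$ and $\nu\times H_j$ on $R\Sphere$.

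To make the \emph{data} appear I would introduce the field $(\Phi,\Psi)$ radiated in the medium $n_2$ by the source $(n_1-n_2)E_1$: it solves \eqref{eq:permaxwell} for $n_2$ up to an inhomogeneity supported in $B(\pi)$ and satisfies the Silver--M\"uller condition \eqref{eq:silver}, hence is represented through the Green tensor $w_{n_2}$ against $(n_1-n_2)E_1$. Pairing its defining equations with $(E_2,H_2)$ by the same divergence identity reproduces $\ii\kappa\int_{B(\pi)}(n_1-n_2)E_1\cdot E_2$ as the boundary integral of the Cauchy data of $(\Phi,\Psi)$ against that of $(E_2,H_2)$. The key point is that the Cauchy data of $\Phi$ on $R\Sphere$ carries the factor $w_1-w_2$: writing the Stratton--Chu representation of $E_1$ once with the medium Green tensor $w_{n_1}$, which reproduces $E_1$ exactly, and once with $w_{n_2}$, which reproduces $E_1-\Phi$ because the mismatch is precisely the volume potential defining $\Phi$, and subtracting the two, the common free-space singularity cancels and
\begin{equation*}
\Phi(x)=\int_{R\Sphere}\rbra{w_{n_1}-w_{n_2}}(x,y)\,g_1(y)\,\dd S(y),\qquad x\in B(R),
\end{equation*}
schematically, where $g_1$ abbreviates the pair of tangential traces of $E_1$ on $R\Sphere$ and $w_{n_1}-w_{n_2}=w_{n_1}^\mathrm{s}-w_{n_2}^\mathrm{s}$ on $R\Sphere\times R\Sphere$ is the measurement. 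The magnetic trace $\nu\times\Psi$ is then recovered from $\Phi$ through $\Psi=(\ii\kappa)^{-1}\curl\Phi$. Consequently $\int_{B(\pi)}(n_1-n_2)E_1\cdot E_2$ equals a bilinear form in the Cauchy data of $E_1$ and $E_2$ whose kernel is $w_1-w_2$.

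Since $w_1-w_2\in(L^2(R\Sphere\times R\Sphere))^{3\times3}$ acts as a Hilbert--Schmidt operator, Cauchy--Schwarz bounds this bilinear form by $\data$ times the $L^2(R\Sphere)$-norms of the Cauchy data of $E_1$ and $E_2$. These boundary norms are finally controlled by interior data: in the annulus $B(3R/2)\setminus\overline{B(\pi)}$ one has $n_1=n_2=1$, so $E_j,H_j$ solve the constant-coefficient Maxwell system there, and Caccioppoli estimates on a neighbourhood of $R\Sphere$ compactly contained in $B(3R/2)$, followed by the trace theorem, yield $\norm{\nu\times E_j}{L^2(R\Sphere)}+\norm{\nu\times H_j}{L^2(R\Sphere)}\le C\norm{E_j}{L^2(B(3R/2))}$. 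Collecting constants gives the assertion with $\Mdata$ depending only on $\kappa,R,b$ and $C_m$.

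The main obstacle is the middle step. One must check that \emph{every} piece of the boundary bilinear form---including the terms carrying tangential or normal derivatives of $w_{n_1}-w_{n_2}$ produced by the Stratton--Chu kernels---is bounded by the plain $L^2$-norm $\data$ of the bare data, rather than by a stronger norm. This is where it is essential that the scattered tensors $w_{n_j}^\mathrm{s}$ are smooth in $B(R)$ (the source singularity residing only in the free part) and that, in the source-free annulus, the radiating difference $w_{n_1}-w_{n_2}$ obeys interior elliptic estimates, so that its derivatives on $R\Sphere$ are controlled by its $L^2$ trace. Carrying out the vector bookkeeping of the Maxwell Stratton--Chu formulas and the reciprocity of $w_{n_j}$ rigorously is the technical heart of the argument; the dependence of the constants on $b$ enters only through the lower bound $\Re(n)\ge b$ guaranteeing unique solvability and the regularity of the Green tensors.
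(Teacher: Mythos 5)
Your opening identity and your closing Caccioppoli/trace estimate are both fine and match the structure of the paper's argument (which simply invokes H\"ahner's Lemma~5: a boundary identity, a trace bound, and a Hilbert--Schmidt estimate). The genuine gap is exactly the ``middle step'' you flag, and the fix you propose for it does not work. Your Stratton--Chu route represents $\Phi$ through \emph{both} $w_{n_1}-w_{n_2}$ and $\curl_y\rbra{w_{n_1}-w_{n_2}}$ paired against the tangential traces of $E_1$, and then $\Psi=(\ii\kappa)^{-1}\curl_x\Phi$ injects a further $\curl_x$; so the final bilinear form contains first and second derivatives of the kernel on $R\Sphere\times R\Sphere$. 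These are \emph{not} controlled by $\data$. Your claimed remedy --- that interior elliptic estimates in the source-free annulus bound derivatives of the radiating difference \emph{on} $R\Sphere$ by its $L^2$ norm \emph{on} $R\Sphere$ --- is false: interior estimates control derivatives at a point by the $L^2$ norm over a full neighbourhood, not over the very sphere on which the data is measured. For a radiating field outside $B(\pi)$ the map from its trace on $R\Sphere$ to its tangential or normal derivatives on $R\Sphere$ is a pseudodifferential operator of order one (an exterior Dirichlet-to-Neumann map), unbounded on $L^2$; a single high spherical-harmonic mode of $w_1-w_2$, normalised in $L^2(R\Sphere\times R\Sphere)$, has derivatives on $R\Sphere\times R\Sphere$ growing like the harmonic degree. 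So the constant $\Mdata$ you would obtain is infinite.

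The paper (following H\"ahner) circumvents this by a different algebraic setup rather than by estimating the derivative terms. One extends $(E_j,H_j)$ across an auxiliary sphere to radiating fields $(V_j,W_j)$ whose tangential \emph{electric} traces are continuous while the tangential \emph{magnetic} traces jump; H\"ahner's identity~(12) then expresses $2\kappa^2\int_{B(\pi)}(n_1-n_2)E_1E_2\,\dd x$ as the pairing of these magnetic jumps through the operator $N_1-N_2$, where $(N_j a)(x)=2\nu\times\int_{R\Sphere}w_j(x,y)a(y)\,\dd y$ involves only the \emph{undifferentiated} Green tensor. The jump densities are bounded by $\norm{E_j}{L^2(B(3R/2))}$ (H\"ahner's~(11), the analogue of your Caccioppoli step), and the Hilbert--Schmidt bound on $N_1-N_2$ by the $L^2$ norm of its kernel then gives exactly $\data$. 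If you want to complete your proof along your own lines, you must redesign the representation of $\Phi$ and of the boundary pairing so that all derivatives land on the densities (which are controlled by interior $L^2$ norms of $E_1,E_2$) and none on the kernel $w_1-w_2$; as written, the argument does not close.
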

	\begin{proof}
		 The proof follows along the lines of \cite[Lemma 5]{Haehner2000} but other norms are used there therefore we sketch the proof here.
		 For $j=1,2$ extend $(E_j,H_j)$  to radiating solutions $(V_j,W_j)$ fulfilling \eqref{eq:silver}, with $(V_j,W_j)|_{B(3R/2)}=(E_j,H_j)$ and $\nu \times E_j^-=\nu\times V_j^+$ on $3R/2 \Sphere$, where $\nu$ denotes the outer normal vector on $3R/2\Sphere$ and $E_j^-$ and $V_j^+$ denote the inner and outer Dirichlet trace on $3R/2 \Sphere$ of $E_j$ and $V_j$ respectively. By \cite[eq.~(12)]{Haehner2000} the equality 
		 \begin{multline*}
			2\kappa^2 \int_{B(\pi)} (n_1-n_2)E_1 E_2\,\dd x \\
			=\int_{R\Sphere} \sbra{\nu \times \rbra{W_1^{-} -W_1^{+}}\times \nu} \sbra{N_1- N_2} \sbra{\nu \times \rbra{W_2^{-} -W_2^{+}}}\,\dd s
		 \end{multline*}
		 holds true with the operator
		 \begin{equation*}
		 	(N_j a)(x) = 2\nu \times \int_{R\Sphere}  w_j(x,y)  a(y)\,\dd y
		 \end{equation*}
		for $a$ sufficiently smooth and $j=1,2$ (see \cite[Lemma 3 and 10]{Haehner2000} for further properties of $N_j$). Again by \cite[eq.~(11)]{Haehner2000} we know there exists a constant $c$ such that
		\begin{equation*}
			\norm{\nu \times \rbra{W_j^{-} -W_j^{+}}}{L^2(R\Sphere)}\leq c \norm{E_j}{L^2(B(3R/2))}.
		\end{equation*}
Estimating the $L^2$-operator norm of $N_1-N_2$ by the $L^2$-norm 
of its kernel gives the assertion.
	\end{proof}

This result can now be used to derive bounds on Fourier coefficients of 
$n_1-n_2$. Comparing Lemma \ref{lem:fourierdiff} below with 
\cite[Lemma 6]{Haehner2000}, note that our choice of $t\geq \tmin$ yields a 
better control of the difference of the Fourier coefficients in $\varrho$ 
since in our case the first summand is independent of $\varrho$ and in the 
second summand we have a factor of $\varrho$ instead of $\varrho^4$. 
Moreover, the additional factor $\norm{n_1-n_2}{\Hm}$ will be useful. 
That our estimate depends on $\varrho$ is the main difference between this 
result and the comparable result \cite[Lemma 3.3]{Hohage2015} in the acoustic 
case. This will also be the major difference in the proof of the variational 
source condition for the electromagnetic compared to the acoustic case leading 
to the smaller exponent in the index function.
	
	\begin{lemma}\label{lem:fourierdiff}
		Let $R>\pi$, $m>7/2$ and $n_1$ and $n_2$ be two refractive indices such that $n_j\in \solsetb \cap H^m$ with $\Tnorm{n_j}{H^m}\leq C_m$ for some $C_m \geq 0$ with corresponding near field data $w_j$ for $j=1,2$. Let $t\geq \tmin$ with $\tmin$ as in \eqref{eq:GOSofCm:t1def} and $1\leq\varrho\leq 2\sqrt{\kappa^2+t^2}$. Then there exists a constant $\Mdiff$ depending only on $R, \kappa, b$ and $C_m$ such that
		\begin{equation*}
			\absval{\fourierdiff{n_1}{n_2}}\leq \Mdiff \rbra{\data \ee^{3Rt}+\norm{n_1-n_2}{\Hm} \frac{\varrho}{t}}
		\end{equation*}
		holds true for all $\gamma\in \Zset^3$ with $\Tabsval \gamma\leq \varrho$.
	\end{lemma}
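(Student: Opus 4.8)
The plan is to feed Lemma~\ref{lem:difftodata} with a pair of CGO solutions whose phases combine to the plane wave $\ee^{-\ii\gamma\cdot x}$, so that the leading part of $\int_{B(\pi)}(n_1-n_2)\,E_1\cdot E_2\,\dd x$ becomes a multiple of $\fourierdiff{n_1}{n_2}$. Concretely, with $p:=\Tabsval\gamma/2$ and $q:=\sqrt{t^2+\kappa^2-p^2}$, pick an orthonormal frame $e_1,e_2,e_3$ of $\Rset^3$ with $e_1=\gamma/\Tabsval\gamma$ (arbitrary if $\gamma=0$) and set
\[
\zeta_{1,2}:=-\tfrac\gamma2\pm\rbra{q\,e_2+\ii t\,e_3},\qquad \eta_{1,2}:=\ii t\,e_1\pm p\,e_3 .
\]
The hypothesis $\varrho\le 2\sqrt{\kappa^2+t^2}$ is precisely what makes $q$ real for $\Tabsval\gamma\le\varrho$. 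One checks $\zeta_j\cdot\zeta_j=\kappa^2$, $\zeta_j\cdot\eta_j=0$, $\Tabsval{\Im\zeta_j}=t\ge\tmin$ and $\zeta_1+\zeta_2=-\gamma$, so Theorem~\ref{thm:GOSofCm} applies to $n=n_1$ and $n=n_2$ and yields solutions $E_j=\ee^{\ii\zeta_j\cdot x}\sbra{\eta_j+f_j\zeta_j+V_j}$ of \eqref{eq:permaxwell} on $B(3R/2)$ (together with $H_j=(\ii\kappa)^{-1}\curl E_j$) satisfying $\norm{f_j}{L^2(B(3R/2))}+\norm{V_j}{L^2(B(3R/2))}\le\Mgos\Tabsval{\eta_j}/t$.

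The decisive point, and the place where the vector structure of the problem enters, is the behaviour of the bilinear products of these vectors. A short computation gives $\eta_1\cdot\eta_2=-(t^2+p^2)$, hence $\Tabsval{\eta_1\cdot\eta_2}=\Tabsval{\eta_1}\,\Tabsval{\eta_2}$: the leading coefficient is non-degenerate. This is the main obstacle to overcome, since the naive choice of a common polarization $\eta_1=\eta_2$ orthogonal to both $\zeta_1$ and $\zeta_2$ forces $\Tabsval{\eta_1\cdot\eta_2}/(\Tabsval{\eta_1}\Tabsval{\eta_2})$ to degenerate like $t^{-2}$ and destroys the estimate. At the same time the mixed products remain controlled: $\eta_1\cdot\zeta_2=\zeta_1\cdot\eta_2=-2\ii pt$, so $\Tabsval{\eta_1\cdot\zeta_2}\le\varrho t$; and $\zeta_1\cdot\zeta_2=\Tabsval\gamma^2/2-\kappa^2$, so $\Tabsval{\zeta_1\cdot\zeta_2}\le\varrho^2/2+\kappa^2$, while $\Tabsval{\zeta_j}=\sqrt{2t^2+\kappa^2}$ and $\Tabsval{\eta_j}=\sqrt{t^2+p^2}\ge t$.

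Inserting $E_1\cdot E_2=\ee^{-\ii\gamma\cdot x}\rbra{\eta_1\cdot\eta_2+r}$ into Lemma~\ref{lem:difftodata} and using that $n_1-n_2$ is supported in $B(\pi)\subset C(\pi)$, the leading term integrates to $(\eta_1\cdot\eta_2)(2\pi)^3\fourierdiff{n_1}{n_2}$. The data side of Lemma~\ref{lem:difftodata} is handled with $\norm{E_j}{L^2(B(3R/2))}\le C\ee^{3Rt/2}\Tabsval{\eta_j}$, which follows from $\Tabsval{\ee^{\ii\zeta_j\cdot x}}\le\ee^{3Rt/2}$ on $B(3R/2)$ together with $\Tabsval{\zeta_j}\norm{f_j}{L^2}\lesssim\Tabsval{\eta_j}$, and this produces the term $\Mdata\,\data\,\ee^{3Rt}\Tabsval{\eta_1}\Tabsval{\eta_2}$. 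The remainder $r$ consists of the eight cross terms of $\sbra{\eta_1+f_1\zeta_1+V_1}\cdot\sbra{\eta_2+f_2\zeta_2+V_2}$; estimating $\int_{B(\pi)}(n_1-n_2)\ee^{-\ii\gamma\cdot x}r\,\dd x$ by $\norm{n_1-n_2}{L^\infty}\norm{r}{L^1}$ and using the products above, every contribution is at most $C\,\varrho t^{-1}\,\Tabsval{\eta_1}\Tabsval{\eta_2}\norm{n_1-n_2}{L^\infty}$. Indeed the genuinely linear terms $f_2(\eta_1\cdot\zeta_2)$ are of size $\varrho t\cdot\Mgos\Tabsval{\eta_2}/t=\varrho\,\Tabsval{\eta_2}\lesssim(\varrho/t)\Tabsval{\eta_1}\Tabsval{\eta_2}$ (because $\Tabsval{\eta_1}\ge t$), the quadratic term $f_1f_2(\zeta_1\cdot\zeta_2)$ is of size $(\varrho^2/2+\kappa^2)\Mgos^2\Tabsval{\eta_1}\Tabsval{\eta_2}/t^2$ with $(\varrho^2+\kappa^2)/t^2\lesssim\varrho/t$ since $\varrho\le2\sqrt{\kappa^2+t^2}$, $\varrho\ge1$ and $t\ge\tmin$, and the terms involving $V_j$ are even smaller, of order $t^{-1}$ or $t^{-2}$.

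Dividing through by $\Tabsval{\eta_1\cdot\eta_2}=\Tabsval{\eta_1}\Tabsval{\eta_2}$ eliminates the $\eta$-dependence completely, leaving $\Tabsval{\fourierdiff{n_1}{n_2}}\lesssim \data\,\ee^{3Rt}+\norm{n_1-n_2}{L^\infty}\varrho/t$. Finally the Sobolev embedding $\norm{\cdot}{C^2(C(\pi))}\le\Me\norm{\cdot}{\Hm}$ of \eqref{eq:defi_Mm} replaces the $L^\infty$-norm by the $\Hm$-norm, yielding the asserted bound with a constant $\Mdiff$ assembled from $\Mgos,\Mdata,\Me,\kappa,\tmin$ and $R$, hence depending only on $R,\kappa,b$ and $C_m$.
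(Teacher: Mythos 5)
Your proof is correct and follows essentially the same route as the paper: the same CGO vectors (your $\zeta_j,\eta_j$ coincide with the paper's choice up to relabelling the frame and multiplying the polarizations by $\ii t$), the same application of Lemma~\ref{lem:difftodata} to the leading term, and the same term-by-term bounds on the eight cross terms using Theorem~\ref{thm:GOSofCm} and $\varrho/t\lesssim 1$. The only cosmetic difference is that you keep $\Tabsval{\eta_1\cdot\eta_2}=\Tabsval{\eta_1}\Tabsval{\eta_2}$ and divide it out at the end, whereas the paper normalizes $\eta_1\cdot\eta_2=1+\Tabsval{\gamma}^2/(4t^2)$ and absorbs the deviation from $1$ into the $\varrho/t$ error term.
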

	\begin{proof}
		Let $\gamma \in \Zset^3$ with $\Tabsval \gamma \leq \varrho$ be given. Choose $a_1$ and $a_2$ in $\Rset^3$ such that $\{\gamma/\Tabsval \gamma, a_1, a_2\}$ form an orthonormal system of $\Rset^3$. Define for $t\geq \tmin$ the following vectors in $\Cset^3$:
		\begin{equation}
		\begin{aligned}
			&\zeta_1 := -\frac{1}{2} \gamma + \ii t a_1 +\sqrt{\kappa^2+t^2-\frac{\absval{\gamma}^2}{4}} a_2, && \eta_ 1 := \frac{1}{\absval \gamma} \gamma - \ii \frac{\absval \gamma}{2t} a_1,\\
			&\zeta_2 := -\frac{1}{2} \gamma - \ii t a_1 -\sqrt{\kappa^2+t^2-\frac{\absval{\gamma}^2}{4}} a_2, && \eta_ 2 := \frac{1}{\absval \gamma} \gamma + \ii \frac{\absval \gamma}{2t} a_1.
		\end{aligned}\label{eq:fourierdiff:vectors}
		\end{equation}
		These vectors satisfy the relations
		\begin{align*}
			\zeta_1 \cdot \zeta_1=\zeta_2\cdot \zeta_2=\kappa^2, && \zeta_1\cdot \eta_1= \zeta_2\cdot \eta_2=0, && \zeta_1+\zeta_2=-\gamma.
		\end{align*}
		Hence by Theorem \ref{thm:GOSofCm} for $j=1,2$ there exist solutions $E_j, H_j$ to \eqref{eq:permaxwell} in $B(2R)$ with $n$ replaced by $n_j$, 
		which have the form \eqref{eq:GOSform} in $B(3R/2)$ with $\zeta, \eta$ replaced by $\zeta_j, \eta_j$. The product of these solutions has the form:
		\begin{equation}
		\begin{aligned}
			E_1\cdot E_2 &= \ee^{-\ii \gamma \cdot x} \sbra{\eta_1+f_1 \zeta_1+V_1}\cdot \sbra{\eta_2+f_2  \zeta_2+V_2} \\
			&=\ee^{-\ii \gamma \cdot x} \left[1+\frac{\absval{\gamma}^2}{4t^2}- \absval{\gamma} (f_1+f_2) +\eta_1 \cdot V_2 +\eta_2 \cdot V_1 \right. \\
			&\phantom{=\ee^{-\ii \gamma \cdot x}}\quad \left. + f_1 f_2 \bigg( \frac{\absval \gamma^2}{2}-\kappa^2 \bigg) + f_1\zeta_1\cdot V_2+f_2 \zeta_2\cdot V_2+V_1\cdot V_2  \right],
		\end{aligned}\label{eq:fourierdiff:product}
		\end{equation}
		
		By the definition of the Fourier transform together with \eqref{eq:fourierdiff:product} this implies that
		\begin{equation}
		\begin{aligned}
			(2\pi)^{3/2}\absval{\fourierdiff{n_1}{n_2}}\leq& \bigg\lvert\int_{B(\pi)} (n_1-n_2)E_1 E_2\,\dd x \bigg\rvert+ \bigg\lvert\int_{B(\pi)} (n_1-n_2) \\ &\rbra{\frac{\absval{\gamma}^2}{4t^2}- \absval{\gamma} (f_1+f_2) +\eta_1 \cdot V_2 +\eta_2 \cdot V_1} \,\dd x \bigg\rvert \\
			& +\bigg\lvert\int_{B(\pi)} (n_1-n_2) \bigg(f_1 f_2 \Big( \frac{\absval \gamma^2}{2}-\kappa^2 \Big) + f_1\zeta_1\cdot V_2\\
			&+f_2 \zeta_2\cdot V_2+V_1\cdot V_2 \bigg) \, \dd x\bigg\rvert
		\end{aligned}\label{eq:fourierdiff:integralsum}
		\end{equation}
		To estimate further we need the moduli of the vectors in \eqref{eq:fourierdiff:vectors} which are given by
		\begin{align*}
			\absval{\zeta_1}=\absval{\zeta_2}=\sqrt{2t^2+\kappa^2}, && \absval{\eta_1}=\absval{\eta_2}=\sqrt{1+{\absval{\gamma}^2}/{\absval{t}^2}}.
		\end{align*}
		Since $t\geq \tmin \geq \max\{1, \kappa^2\}$ we can estimate that
		\begin{align}\label{eq:rhoovert}
			\frac{\varrho^2}{t^2}\leq \frac{4(t^2+\kappa^2)}{t^2}\leq 8
		\end{align}
		and hence $\Tabsval{\eta_j}\leq 3$ for $j=1,2$.
		
		Applying Lemma \ref{lem:difftodata} to the first integral of the right hand side of \eqref{eq:fourierdiff:integralsum} one obtains
		\begin{multline}
			\absval{\int_{B(\pi)} (n_1-n_2)E_1 E_2\,\dd x}\\ \leq 9 \Mdata \data \ee^{3Rt}\rbra{\norm{1}{L^2(B(3R/2))}+3\Mgos}^2,\label{eq:fourierdiff:1int}
		\end{multline}
		since $t\geq \max\{1, \kappa^2\}$ implies that
		\begin{align*}
			\norm{E_j}{L^2(B(3R/2))}&\leq 3 \ee^{3Rt/2}\rbra{\norm{1}{L^2(B(3R/2))}+\Mgos \frac{\absval{\zeta}+1}{t}}\\
			&\leq 3 \ee^{3Rt/2}\rbra{\norm{1}{L^2(B(3R/2))}+3 \Mgos},\quad \text{since }  \frac{\sqrt{2t^2+\kappa^2}}{t}\leq 2
		\end{align*}
		for $j=1,2$.
		
		The second integral of the right hand side of \eqref{eq:fourierdiff:integralsum} can be estimated by
		\begin{equation}
		\begin{aligned}
			&\absval{\int_{B(\pi)} (n_1-n_2) \rbra{\frac{\absval{\gamma}^2}{4t^2}- \absval{\gamma} (f_1+f_2) +\eta_1 \cdot V_2 +\eta_2 \cdot V_1} \,\dd x }\\
			\leq& \norm{n_1-n_2}{L^2(C(\pi))}\bigg(\frac{1}{\sqrt{2}}\frac{\varrho}{t} \norm{1}{L^2(B(\pi))} +\varrho \rbra{\norm{f_1}{L^2(B(R))}+\norm{f_2}{L^2(B(R))}}\\
			&+3 \rbra{\norm{V_1}{L^2(B(R))}+\norm{V_2}{L^2(B(R))}} \bigg)\\
			\leq&\norm{n_1-n_2}{\Hm} \frac{\varrho}{t} \rbra{\frac{1}{\sqrt{2}}\norm{1}{L^2(B(\pi))} +24\Mgos}
		\end{aligned}\label{eq:fourierdiff:2int}
		\end{equation}
		by the estimate on $f_j$ and $V_j$ in Theorem \ref{thm:GOSofCm} and by \eqref{eq:rhoovert}. Similarly for the third integral
		\begin{equation}
		\begin{aligned}
			&\bigg\lvert\int_{B(\pi)} (n_1-n_2) \bigg(f_1 f_2 \Big( \frac{\absval \gamma^2}{2}-\kappa^2 \Big) + f_1\zeta_1\cdot V_2+f_2 \zeta_2\cdot V_2+V_1\cdot V_2 \bigg) \, \dd x\bigg\rvert\\
			\leq\,&\norm{n_1-n_2}{L^\infty(C(\pi))} \bigg( \Big( \frac{\absval \varrho^2}{2}+\kappa^2 \Big)\norm{f_1}{L^2(B(R))}\norm{f_2}{L^2(B(R))}+ \\
			& \sqrt{2t^2+\kappa^2}\Big( \norm{f_1}{L^2(B(R))}\norm{V_2}{L^2(B(R))}+\norm{V_1}{L^2(B(R))}\norm{f_2}{L^2(B(R))}\Big)\\
			& +\norm{V_1}{L^2(B(R))}\norm{V_2}{L^2(B(R))}\bigg)\\
			\leq\,& \Me \norm{n_1-n_2}{\Hm} \rbra{18 \Mgos^2 \frac{\varrho}{t}+36 \Mgos^2 \frac{1}{t}+9\Mgos^2 \frac{1}{t^2}}\\
			\leq\,& 63 \Me \Mgos^2 \norm{n_1-n_2}{\Hm} \frac{\varrho}{t},
		\end{aligned}\label{eq:fourierdiff:3int}
		\end{equation}
		where we used in addition that $t\geq 60\kappa^2$ implies
		\begin{align}\label{eq:t_rho_kappa}
			\frac{1}{t} \rbra{\frac{\varrho}{2}+\kappa^2}\leq 2.
		\end{align}

		Combining \eqref{eq:fourierdiff:integralsum} to \eqref{eq:fourierdiff:3int} one sees, that there exists a constant $\Mdiff$ such that
		\begin{equation*}
			\absval{\fourierdiff{n_1}{n_2}}\leq \Mdiff \rbra{\data \ee^{3Rt}+\norm{n_1-n_2}{\Hm} \frac{\varrho}{t}},
		\end{equation*}
		where $\Mdiff$ depends only on $R, \kappa, b$ and $C_m$.
	\end{proof}

\section{Proof of Theorem \ref{thm:vscnear}}\label{sec:proofnear}
	Before we proof the main theorem of this paper we will rewrite the variational source condition \eqref{eq:vsc} in an equivalent way	
	\begin{equation}\label{eq:resultproduct}
		\Re \pairing{\ntrue}{\ntrue-n}_{\Xspace}\leq \frac{1-\beta}{2} \norm{n-\ntrue}{\Xspace}^2+\psi \rbra{\norm{F(n)-F(\ntrue)}{\Yspace}^2},
	\end{equation}
	since this form is more convenient for the proof. 
	
	\begin{proof}[Proof of Theorem \ref{thm:vscnear}]
		Let $n\in \solsetb\cap \Hm$ be given. To be able to apply the previous lemmata we need that $\Tnorm{n}{H^m}\leq C_m$ for some $C_m>0$. Thus consider the case $\Tnorm{n-\ntrue}{\Hm}>4C_s$. Then by applying Cauchy-Schwarz we obtain
		\begin{align*}
			\Re \pairing{\ntrue}{\ntrue-n}_{\Hm}\leq \norm{\ntrue}{\Hm} \norm{\ntrue-n}{\Hm}\leq \frac{1}{4}\norm{n-\ntrue}{\Hm}^2
		\end{align*}
		which implies \eqref{eq:resultproduct}.
		
		Hence, it remains to treat the case $\Tnorm{n}{H^m}\leq5 C_s$. 
		Let us introduce the notation $\mult:=1+\Tabsval{\gamma}^2$ for $\gamma\in\Zset^3$.
		We choose $t\geq \tmin$ with $C_m=5C_s$ in Lemma \ref{lem:fourierdiff} and $1\leq \varrho \leq 2 \sqrt{\kappa^2+t^2}$ and set $\delta=\norm{w_{n}-w_{\ntrue}}{(L^2(R\Sphere\times R\Sphere))^{3\times 3}}$ to obtain
		\begin{equation}
		\begin{aligned}
			\Re\pairing{\ntrue}{P_\varrho(\ntrue-n)}_{\Hm}\!\! &= \Re\!\!\! \sum_{\gamma \in \Zset^3\cap B(\varrho)}\!\!\!\!\! \mult^m \fourier[\ntrue] \overline{\fourierdiff{\ntrue}{n}}\\
			&\leq \Mdiff\rbra{\delta \ee^{3Rt}+\norm{\ntrue-n}{\Hm} \frac{\varrho}{t}}\!\!\!\!\!\sum_{\gamma \in \Zset^3\cap B(\varrho)}\!\!\!\!\! \mult^m \absval{\fourier[\ntrue]}, 
		\end{aligned}\label{eq:vscnear:small}
		\end{equation}
		where $P_\varrho(f)$ is the projection onto the Fourier coefficients of $f$ with $\Tabsval \gamma\leq \varrho$. By Lemma 4.3 of \cite{Hohage2015} there exists a constant $\Mms$ depending on $m$ and $s$ such that the remaining sum on the right hand side of \eqref{eq:vscnear:small} can be bounded by
		\begin{equation}
			\sum_{\gamma \in \Zset^3\cap B(\varrho)}\!\!\!\!\! \absval{\mult^m \fourier[\ntrue]}\leq \sqrt{\sum_{\gamma \in \Zset^3} \mult^s \Tabsval{\fourier[\ntrue]}^2} \sqrt{\sum_{\gamma \in \Zset^3\cap B(\varrho)}\!\!\!\!\! \mult^{2m-s}}\leq \Mms C_s \varrho^\tau \label{eq:vscnear:sum}
		\end{equation}
		with $\tau=\max\{2m+3/2-s,0\}$ for $s\neq 2m+3/2$.
		
		To obtain a bound on the high frequencies we use the higher smoothness of $\ntrue$ similar to \cite{Haehner2001,Hohage2015,Isaev2014}. Therefore, we first apply Cauchy-Schwarz and then Young's inequality to obtain
		\begin{equation}
		\begin{aligned}
			&\phantom{\leq}\Re\pairing{\ntrue}{(I-P_\varrho)(\ntrue-n)}_{\Hm}\!\! = \Re\!\!\!\sum_{\gamma \in \Zset^3\setminus B(\varrho)}\!\!\!\!\! \mult^m \fourier[\ntrue] \overline{\fourierdiff{\ntrue}{n}}\\
			&\leq \sqrt{\sum_{\gamma \in \Zset^3\setminus B(\varrho)}\!\!\!\!\! \mult^m \absval{\fourier[\ntrue]}^2 } \sqrt{\sum_{\gamma \in \Zset^3\setminus B(\varrho)}\!\!\!\!\! \mult^m \absval{\fourierdiff{\ntrue}{n}}^2}\\
			&\leq 2\!\!\!\sum_{\gamma \in \Zset^3\setminus B(\varrho)}\!\!\!\!\! \mult^m \absval{\fourier[\ntrue]}^2 + \frac{1}{8} \norm{\ntrue-n}{\Hm}^2.
		\end{aligned}\label{eq:vscnear:high}
		\end{equation}
		The smoothness assumption on $\ntrue$ now implies that
		\begin{equation}
			\sum_{\gamma \in \Zset^3\setminus B(\varrho)}\!\!\!\!\! \mult^m \absval{\fourier[\ntrue]}^2\leq \frac{1}{(1+\varrho^2)^{s-m}} \sum_{\gamma \in \Zset^3\setminus B(\varrho)}\!\!\!\!\! \mult^s \absval{\fourier[\ntrue]}^2 \leq \varrho^{2(m-s)} C_s^2.\label{eq:vscnear:smooth}
		\end{equation}
		
		Combining \eqref{eq:vscnear:small} to \eqref{eq:vscnear:smooth} we arrive at
		\begin{equation}
		\begin{aligned}
			\Re\pairing{\ntrue}{\ntrue-n}_{\Hm}\!\! &\leq \frac{1}{8} \norm{\ntrue-n}{\Hm}^2  + 2 \varrho^{2(m-s)} C_s^2\\
			&\phantom{leq} + \Mdiff\Mms C_s \varrho^\tau \rbra{\delta \ee^{3Rt}+\norm{\ntrue-n}{\Hm} \frac{\varrho}{t}}\\
			&\leq \rbra{\frac{1}{8}+\frac{1}{8} \frac{\varrho^{2(1+\tau+s-m)}}{\varepsilon t^2}} \norm{\ntrue-n}{\Hm}^2\\
			&\phantom{leq} + 2 C_s^2 (1+\varepsilon \Mdiff^2\Mms^2) \varrho^{2(m-s)}+\Mdiff\Mms C_s \varrho^\tau \delta \ee^{3Rt}
		\end{aligned}\label{eq:vscnear:trho}
		\end{equation}
		for all $\varepsilon>0$ by Young's inequality.
		
		Next we choose the free parameters $\varrho, t$ and $\varepsilon$ in dependence of $\delta>0$ in such a way that the right hand side of \eqref{eq:vscnear:trho} is approximately minimal and the constraints of Lemma \ref{lem:fourierdiff} are satisfied. We choose
		\begin{equation}
			\varepsilon = (9R)^2, \qquad 9Rt = \ln (3+\delta^{-2}) = \varrho^{1+\tau+s-m}. \label{eq:vscnear:parameter}
		\end{equation}
		Then the constraint $\varrho\geq 1$ is fulfilled. Since $1+\tau+s-m\geq m+5/2>6$,  
		there exists a $t^\prime$ such that $2 t\geq (9Rt)^{1/(1+\tau+s-m)}$ for all 
		$t\geq t^\prime$. Hence by strengthening the constraint on $t$ to 
		$t\geq \tmin^\prime:=\max\{\tmin,t^\prime\}$ the upper bound on $\varrho$ is 
		satisfied since then $2\sqrt{\kappa^2+t^2}>2t\geq (9Rt)^{1/(1+\tau+s-m)}=\varrho$ 
		(see \eqref{eq:t_rho_kappa} for the first inequality). 
		However $t \geq t^\prime$ is only satisfied for $\delta\leq \dmax$, where 
		$\dmax:=(\exp(9R \tmin^\prime)-3)^{-1/2}$ (or $\dmax=\infty$ if 
		$\exp(9R \tmin^\prime)\leq 3$), hence the case $\delta>\dmax$ has to be handled 
		subsequently. For $\delta\leq\dmax$ plugging our choice 
		\eqref{eq:vscnear:parameter} into \eqref{eq:vscnear:trho} yields
		\begin{align*}
			\Re\pairing{\ntrue}{\ntrue-n}_{\Hm}\!\! 
			&\leq \frac{1}{4} \norm{\ntrue-n}{\Hm}^2
			+2 C_s^2 (1+(9R)^2 \Mdiff^2\Mms^2) \rbra{\ln (3+\delta^{-2})}^{-2\nu}\\
			&\phantom{leq} +\Mdiff\Mms C_s \rbra{\ln (3+\delta^{-2})}^{\lambda} \delta (3+\delta^{-2})^{1/3}
		\end{align*}
		with 
		\begin{equation*}
			\nu:=\min \cbra{\frac{s-m}{m+5/2},\frac{s-m}{s-m+1}}, \qquad \lambda=\max\cbra{0, \frac{2m+3/2-s}{m+5/2}}.
		\end{equation*}
		Since the term in the second line tends faster to $0$ then the last one in the first for $\delta\searrow 0$ there exists some constant $\tilde A$ such that for $\delta\leq \dmax$
		\begin{equation*}
			\Re\pairing{\ntrue}{\ntrue-n}_{\Hm}\!\! \leq \frac{1}{4} \norm{\ntrue-n}{\Hm}^2+\tilde A \rbra{\ln (3+\delta^{-2})}^{-2\nu}
		\end{equation*}
		showing \eqref{eq:resultproduct} for $\delta \leq \dmax$.
		
		If on the other hand $\delta>\dmax$ applying again Cauchy-Schwarz and Young's inequality yields
		\begin{equation}\label{eq:vscnear:alldelta}
			\Re\pairing{\ntrue}{\ntrue-n}_{\Hm}\!\! \leq \frac{1}{4} \norm{\ntrue-n}{\Hm}^2+C_s^2.
		\end{equation}
		Therefore, Theorem \ref{thm:vscnear} holds true with $A=\max\{\tilde A, C_s^2 (\ln (3+\dmax^{-2}))^{2\nu}\}$.
	\end{proof}

\section{From near to far field data}\label{sec:neartofar}

	In this section we show that the difference of the near field measurements for two refractive indices can be bounded by a function of their far field measurements. The idea of the proof of Theorem \ref{thm:vscfar} is then to insert this bound into the variational source condition for the near field data.
	
	Since both kinds of data are measured on spheres we will express them in terms of a series representation using spherical harmonics. Let $Y_l^k$ for $l\in \Nset_0$ and $k\in \Zset, \Tabsval k \leq l$ denote the spherical harmonics and set for convenience
	\begin{equation*}
		M:=\cbra{(l_1,k_1,l_2,k_2)\in \Nset_0\times\Zset\times\Nset_0\times\Zset \colon \absval{k_1}\leq l_1,\absval{k_2}\leq l_2}.
	\end{equation*}
	Then we will define the (matrix valued) Fourier coefficients of the far field by
	\begin{equation*}
		\alpha(l_1,m_1,l_2,m_2) = \int_{\Sphere} \int_{\Sphere} e^\infty_n(\hat x, d) \overline{Y^{k_2}_{l_2}(\hat x)} \overline{Y^{k_1}_{l_1}(d)} \, \dd \hat x\, \dd d, \quad (l_1,m_1,l_2,m_2)\in M.
	\end{equation*}
	Denoting by $\smash{h^{(1)}_l}$ the spherical Hankel function of first kind of order $l$ now gives us a series representation for $w_n^\mathrm{s}$ in terms of the Fourier coefficients of the far field. A short introduction on the functions $Y_l^k$ and $\smash{h^{(1)}_l}$ can be found in \cite{Colton2013}.
	
	\begin{lemma}
		Let $R>\pi$ and $n\in \solsetb\cap \Hm$ for $m>7/2$ be a refractive index with $e^\infty_n$ and $w_n$ the corresponding far and near field. Denote by $\alpha(l_1,m_1,l_2,m_2)$ the Fourier coefficients of $e^\infty_n$. Then the scattered part of the near field data has the representation
		\begin{equation*}
		\begin{aligned}
			w_n^\mathrm{s}(x,y) = -\frac{\kappa^4}{4\pi} \sum_{(l_1,k_1,l_2,k_2)\in M}& \ii^{l_1-l_2} \alpha(l_1,k_1,l_2,k_2)\, h_{l_1}^{(1)} (\kappa R)\, h_{l_2}^{(1)} (\kappa R)\\& Y^{k_1}_{l_1}\!\!\rbra{\frac{x}{\absval x}}\, Y^{k_2}_{l_2}\!\!\rbra{\frac{y}{\absval y}}, \qquad |x|>|y|=R.
		\end{aligned}
		\end{equation*}
		Furthermore the series converges absolutely and uniformly on compact sets.
	\end{lemma}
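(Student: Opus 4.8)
The plan is to synthesize $w_n^\mathrm{s}$ out of the far field $e^\infty_n$ using three classical ingredients: the addition theorem for the fundamental solution $\Phi_\kappa$, the Jacobi--Anger expansion relating plane waves to entire (regular) spherical wave functions, and the \emph{mixed reciprocity} relation connecting the field scattered by an electric dipole to the far field generated by an incident plane wave. The decisive structural hint is the index crossing in the claimed identity: the incidence index $(l_1,k_1)$ of $\alpha(l_1,k_1,l_2,k_2)=\int_{\Sphere}\int_{\Sphere} e^\infty_n(\hat x,d)\,\overline{Y_{l_2}^{k_2}(\hat x)}\,\overline{Y_{l_1}^{k_1}(d)}\,\dd\hat x\,\dd d$ is attached to the near-field observation point $x/\absval{x}$, while the far-field observation index $(l_2,k_2)$ is attached to the dipole location $y/\absval{y}$. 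This crossing is exactly the fingerprint of mixed reciprocity, so I would organise the whole argument around it. Since $e^\infty_n$, $\alpha$ and $w_n^\mathrm{s}$ are all matrix-valued, the electromagnetic (vector) content is carried entirely by the matrix-valued coefficients and by the $\curl\curl$/dyadic prefactors, so the angular expansion may be performed entrywise in the scalar spherical harmonics $Y_l^k$, reducing everything to the scalar addition theorem.

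First I would expand the incident dipole field $\Ein_{y,a}$, which is assembled from $\curl\curl\,\bigl(a\,\Phi(x,y)\bigr)$, in (vector) spherical wave functions. Because the inhomogeneity sits in $B(\pi)$ while the source is at $\absval{y}=R>\pi$, the addition theorem for $\Phi_\kappa(x-y)=\ii\kappa\sum_{l,k} j_l(\kappa\absval{x})\,h_l^{(1)}(\kappa\absval{y})\,Y_l^k(\hat x)\,\overline{Y_l^k(\hat y)}$ applies throughout the scatterer region $\absval{x}<\absval{y}$ and attaches to each multipole $(l_2,k_2)$ a coefficient proportional to $h_{l_2}^{(1)}(\kappa R)\,\overline{Y_{l_2}^{k_2}(\hat y)}$ times a regular Bessel-type wave function. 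Next, using the Jacobi--Anger expansion $\ee^{\ii\kappa d\cdot x}=4\pi\sum_{l,k}\ii^{l}\,j_l(\kappa\absval{x})\,Y_l^k(\hat x)\,\overline{Y_l^k(d)}$ together with orthonormality of the $Y_l^k$, I would rewrite each regular wave function as an average of the plane-wave incident fields $\Ein_{d,p}$ over directions $d\in\Sphere$, weighted by a spherical harmonic in $d$ and carrying a factor $\ii^{-l}$. By linearity of \eqref{eq:permaxwell} the scattered field produced by the dipole is then the identical average of the plane-wave scattered fields, whose far fields are precisely the columns of $e^\infty_n(\cdot,d)$.

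It then remains to re-attach the radial behaviour in the observation variable. For fixed $y$ the field $w_n^\mathrm{s}(\cdot,y)$ is a radiating solution of \eqref{eq:permaxwell} outside $B(\pi)$, hence admits a multipole expansion in outgoing wave functions; matching the Hankel asymptotics $h_l^{(1)}(t)\sim(-\ii)^{l+1}\ee^{\ii t}/t$ identifies its multipole coefficients with the spherical-harmonic coefficients of its far field in $\hat x$ up to factors $\ii^{l_1}$ and, on the sphere $\absval{x}=R$, $h_{l_1}^{(1)}(\kappa R)$. Invoking mixed reciprocity to turn this far field in $\hat x$ back into plane-wave scattering data collapses the two angular averages (over the far observation direction and over $d$) into the single double coefficient $\alpha(l_1,k_1,l_2,k_2)$, again by orthonormality of the $Y_l^k$. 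Tracking the constants --- the $1/(4\pi)$ from $\Phi_\kappa$, the two factors $\kappa^2$ from the dipole/$\curl\curl$ structure that combine to $\kappa^4$, and the collected powers of $\ii$ that assemble into $\ii^{l_1-l_2}$ --- produces the stated series, with $h_{l_1}^{(1)}(\kappa\absval{x})$ in place of $h_{l_1}^{(1)}(\kappa R)$ away from the sphere.

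The main obstacle is the convergence assertion rather than the bookkeeping. The products $h_{l_1}^{(1)}(\kappa R)\,h_{l_2}^{(1)}(\kappa R)$ grow super-exponentially, like $(2l_1-1)!!\,(2l_2-1)!!\,(\kappa R)^{-l_1-l_2-2}$, so absolute and uniform convergence hinges on a matching super-exponential \emph{decay} of $\alpha(l_1,k_1,l_2,k_2)$. This decay is available exactly because $\supp(1-n)\subset B(\pi)$ with $\pi<R$: for plane-wave incidence the scattered multipole series converges for every radius $>\pi$, so the far-field spherical-harmonic coefficients obey $\absval{\alpha(l_1,k_1,l_2,k_2)}\lesssim \absval{h_{l_1}^{(1)}(\kappa r)}^{-1}\,\absval{h_{l_2}^{(1)}(\kappa r)}^{-1}$ for any $r\in(\pi,R)$. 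Hence $\alpha(l_1,k_1,l_2,k_2)\,h_{l_1}^{(1)}(\kappa R)\,h_{l_2}^{(1)}(\kappa R)$ decays like the geometric factor $(r/R)^{l_1+l_2}$, and since the number of $k_j$ with $\absval{k_j}\le l_j$ grows only polynomially, choosing such an $r$ furnishes a summable majorant. Off the sphere the ratio improves to $(r/\absval{x})^{l_1}$, which is uniform on compact subsets of $\{\absval{x}>\absval{y}=R\}$; dominated convergence then legitimises the interchanges of summation with the reciprocity and orthogonality manipulations used above and delivers the claimed absolute and uniform convergence.
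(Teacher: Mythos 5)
The paper does not prove this lemma itself but defers entirely to H\"ahner's Lemma 10, and your sketch reconstructs exactly the standard argument behind that citation: the addition theorem for $\Phi_\kappa$, superposition of plane waves via Jacobi--Anger, identification of the multipole coefficients of the radiating scattered field with the spherical-harmonic coefficients of its far field, and the convergence argument exploiting the gap between $\supp(1-n)\subset B(\pi)$ and the measurement radius $R>\pi$, which is indeed the only genuinely delicate point. The approach is sound; what remains unverified is convention-dependent bookkeeping --- the factor $-\kappa^4/(4\pi)$, the powers of $\ii$, and the precise index/conjugation pairing in $\alpha$, which needs the electromagnetic reciprocity relation $w_n(x,y)^T=w_n(y,x)$ (equivalently $e^\infty_n(\hat x,d)^T=e^\infty_n(-d,-\hat x)$) to come out in the stated order --- and you correctly observe that away from the sphere the factor $h^{(1)}_{l_1}(\kappa R)$ should read $h^{(1)}_{l_1}(\kappa\absval{x})$.
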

	\begin{proof}
		See \cite[Lemma 10]{Haehner2000}.
	\end{proof}
	
	A similar result for the acoustic case has been derived by Stefanov in \cite{Stefanov1990}. In \cite{Haehner2001} this results was used together with stability estimates for the inverse of compact linear operators under spectral source conditions to estimate the difference of near field data by the difference of far field data. Since the series representation of the near field for the acoustic and the electromagnetic case coincide up to the fact that the electromagnetic Fourier coefficients are matrix valued, one obtains the following lemma along the 
	lines of \cite{Haehner2001}:
	\begin{lemma}\label{lem:neartofar}
		Let $R>\pi$, $m>7/2$, $C_m>0$ and $0<\theta<1$. Then there exist constants $\omega,\varrho,\dmax>0$ such that for any two refractive indices 
		$n_1,n_2\in \solsetb\cap H^s$ with $\Tnorm{n_j}{H^s}\leq C_s$ for some 
		$C_s>0$, we have 
		\begin{equation*}
			\norm{w_2-w_1}{(L^2(2R\Sphere \times 2R\Sphere))^{3 \times 3}}^2 \leq  \varrho^2 \exp \rbra{- \rbra{- \ln \frac{\norm{u^\infty_2-u^\infty_1}{(L^2(\Sphere \times \Sphere))^{3 \times 3}}}{\omega \varrho}}^\theta } 
		\end{equation*}
		if $\Tnorm{u^\infty_2-u^\infty_1}{(L^2(\Sphere \times \Sphere))^{3 \times 3}}\leq \dmax$  where $w_j$ and $u^\infty_j$ denote near and far field scattering data for $n_j$,  $j=1,2$. 
	\end{lemma}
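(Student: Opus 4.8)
The plan is to reduce the inequality to a comparison of weighted $\ell^2$-norms of the Fourier coefficients of the far field, and then to carry out the classical stability argument for the (exponentially ill-posed) analytic continuation from the far to the near field, in the spirit of \cite{Haehner2001}. First I would use the series representation of the preceding lemma, with the measurement radius there replaced once by $2R$ and once by $R$, together with the orthonormality of the spherical harmonics $Y_l^k$ on $\Sphere$. Writing $\Delta\alpha=\Delta\alpha(l_1,k_1,l_2,k_2)$ for the difference of the matrix-valued far field coefficients of $n_2$ and $n_1$ and $\absval{\cdot}$ for the Frobenius norm, Parseval's identity gives
\[
\norm{u^\infty_2-u^\infty_1}{(L^2(\Sphere\times\Sphere))^{3\times3}}^2=\sum_{M}\absval{\Delta\alpha}^2 ,
\]
while the same representation evaluated on $2R\Sphere\times2R\Sphere$ (where $w_2-w_1=w_2^{\mathrm s}-w_1^{\mathrm s}$ is smooth, both arguments lying outside $B(\pi)$, so the expansion extends to the diagonal by continuity) yields
\[
\norm{w_2-w_1}{(L^2(2R\Sphere\times2R\Sphere))^{3\times3}}^2=c_0\sum_{M} a_{l_1,l_2}\,\absval{\Delta\alpha}^2 ,\qquad a_{l_1,l_2}:=\absval{h^{(1)}_{l_1}(2\kappa R)}^2\absval{h^{(1)}_{l_2}(2\kappa R)}^2 ,
\]
with $c_0=(\kappa^4/4\pi)^2(2R)^4$. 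The whole estimate is thus governed by the single sequence $(\Delta\alpha)$ measured in three weights: $1$ (far field), $a_{l_1,l_2}$ (near field on $2R\Sphere$), and the rougher weights $b_{l_1,l_2}:=\absval{h^{(1)}_{l_1}(\kappa R)}^2\absval{h^{(1)}_{l_2}(\kappa R)}^2$ corresponding to the near field on the smaller sphere $R\Sphere$.

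Next I would supply the a priori information (a source condition in the sense of \cite{Haehner2001}). Since $R>\pi$, the sphere $R\Sphere$ lies outside $\supp(1-n_j)$, and by continuous dependence of the scattered field on $n$ through the Lippmann--Schwinger equation together with the uniform bound on the admissible set, there is a constant $M$ depending only on $\kappa, R, b$ and $C_s$ with $c_0'\sum_{M} b_{l_1,l_2}\absval{\Delta\alpha}^2=\norm{w_2-w_1}{(L^2(R\Sphere\times R\Sphere))^{3\times3}}^2\le M^2$. The source of the logarithmic stability is then the behaviour of the Hankel functions: for a fixed argument $z>0$ one has $\absval{h^{(1)}_l(z)}\sim (2l-1)!!/z^{l+1}$ as $l\to\infty$, whence $\max_{l_1,l_2\le L}a_{l_1,l_2}\le\exp\rbra{c_1 L\ln L+c_1'L}$ grows super-exponentially in the cut-off $L$, while the ratio $a_{l_1,l_2}/b_{l_1,l_2}=\prod_{j=1,2}\absval{h^{(1)}_{l_j}(2\kappa R)}^2/\absval{h^{(1)}_{l_j}(\kappa R)}^2\le C\,4^{-\max\{l_1,l_2\}}$ decays exponentially, since the double factorial is independent of the radius and cancels while the powers of the argument leave the factor $(\kappa R/2\kappa R)^{2l}=4^{-l}$. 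It is exactly the extra factor $\ln L$ in the growth of $a$, absent in a purely exponentially ill-posed problem, that forces the exponent $\theta$ to stay below $1$; compare the remark preceding Lemma \ref{lem:fourierdiff}.

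Finally I would split the sum defining the near field norm at a cut-off $L$: the part with $l_1,l_2\le L$ is bounded by $\exp\rbra{c_1 L\ln L+c_1'L}\,\epsilon^2$ (using the maximum of $a_{l_1,l_2}$ and the far field identity) with $\epsilon:=\norm{u^\infty_2-u^\infty_1}{(L^2(\Sphere\times\Sphere))^{3\times3}}$, and the part with $\max\{l_1,l_2\}>L$ by $C\,4^{-L}M^2$ (using the ratio bound and the a priori bound), so that
\[
\norm{w_2-w_1}{(L^2(2R\Sphere\times2R\Sphere))^{3\times3}}^2\le C\sbra{\exp\rbra{c_1 L\ln L+c_1'L}\,\epsilon^2+4^{-L}M^2}.
\]
Balancing the two terms fixes $L=L(\epsilon)\to\infty$ through $c_1 L\ln L+(c_1'+\ln4)L=2\ln(1/\epsilon)+2\ln M$, so that $L\sim 2\ln(1/\epsilon)/(c_1\ln\ln(1/\epsilon))$ and both terms are of order $4^{-L}\approx\exp\rbra{-c_3\ln(1/\epsilon)/\ln\ln(1/\epsilon)}$. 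Because $1-\theta>0$ one has $\ln(1/\epsilon)/\ln\ln(1/\epsilon)\ge(\ln(1/\epsilon))^\theta$ once $\ln(1/\epsilon)$ exceeds a threshold, which is precisely what the smallness requirement $\epsilon\le\dmax$ encodes; choosing $\varrho$ large enough to absorb $C$ and $M$, and $\omega$ to absorb the constant shift $\ln(\omega\varrho)$ inside the logarithm, then produces the asserted inequality. I expect the genuinely delicate point to be this balancing: making the Hankel bounds uniform down to small orders and converting the natural rate $\exp\rbra{-c_3\ln(1/\epsilon)/\ln\ln(1/\epsilon)}$ into the prescribed form with an arbitrary exponent $\theta<1$, which is the analytic-continuation stability estimate borrowed from \cite{Haehner2001}.
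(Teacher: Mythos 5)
Your argument is correct and is essentially the proof the paper intends: the paper does not write out a proof of this lemma but defers to H\"ahner (2001), whose argument is precisely this spectral splitting --- Parseval with Hankel-function weights on the three spheres, the a priori near-field bound on $R\Sphere$ playing the role of the source condition, and balancing a cut-off $L$ against the super-exponential amplification $\exp(cL\ln L)$, which is exactly what forces $\theta<1$. The only differences are presentational (H\"ahner packages the splitting as an abstract stability estimate for compact operators under logarithmic spectral source conditions, and the matrix-valued coefficients change nothing), apart from one misplaced cross-reference: the remark preceding Lemma \ref{lem:fourierdiff} concerns the $\varrho$-dependence of the Fourier-coefficient estimate, not the origin of the restriction $\theta<1$.
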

	
\section{Proof of Theorem \ref{thm:vscfar}}\label{sec:prooffar}
	Having a variational source condition for near field data and a way to bound near 
field data by far field data, the proof of Theorem \ref{thm:vscfar} now proceeds 
similar to the acoustic case.
	\begin{proof}[Proof of Theorem \ref{thm:vscfar}]
		The case $\Tnorm{\ntrue-n}{\Hm} > 4C_s$ can be treated as in the proof of Theorem \ref{thm:vscnear}.
		
		For $\Tnorm{n}{\Hm}\leq 5 C_s$ we can apply Lemma \ref{lem:neartofar}. Setting
		\begin{equation*}
			\delta:= \Tnorm{F_\mathrm{f}(\ntrue)- F_\mathrm{f}(n)}{(L^2(\Sphere \times \Sphere))^{3 \times 3}}, \quad \varphi(t):= \varrho^2\exp(-(-\ln(\sqrt{t})+\ln(\omega\varrho))^{\theta}),
		\end{equation*}
		it follows from Theorem \ref{thm:vscnear} and the monotonicity of $\psi_\mathrm{n}$ that the variational source condition \eqref{eq:vsc} holds true with $\psi(t) = \psi_\mathrm{n}(\varphi(t))$ if $\delta\leq \dmax$. Bounding $\psi_\mathrm{n}(t)\leq A(\ln t^{-1})^{-2\nu}$ for $t<1$ we obtain 
		\begin{equation*}
			\psi_\mathrm{n}(\varphi(t)) \leq  A\left(-\left(-\ln(\sqrt{t})+\ln (\varrho\omega)\right)^{\theta}-\ln\varrho^2\right)^{-2\nu} \quad \text{for } \sqrt{t}\leq\min\left\{\dmax,\frac{1}{2}\right\}.
		\end{equation*}
		Hence, it is easy to see that there are constants $B>0$ and $\dmaxtilde \in (0,\min\{\dmax,\frac{1}{2}\}]$ such that 
		\begin{equation*}
			\psi_\mathrm{n}(\varphi(t))\leq  B(\ln(3+t^{-1}))^{-2\nu\theta}\quad \text{for }\sqrt{t}\leq \dmaxtilde.
		\end{equation*}
		This shows  \eqref{eq:vsc} for $\delta\leq \dmaxtilde$.
		
		The case $\delta>\dmaxtilde$ is again treated as in the proof of Theorem \ref{thm:vscnear}, see \eqref{eq:vscnear:alldelta}.
	\end{proof}

\appendix

\section{Complex geometrical optics solutions for electromagnetic inverse scattering}\label{sec:GOS}
	In the following we sketch the construction of CGO solution for electromagnetic scattering. We will mostly follow the arguments in \cite{Haehner2000} (a more detailed version by the same author can be found in \cite{Haehner1998}), 
which is based on an idea in \cite{Colton1992}. However, we will make the lower bound on $\Tabsval{\Im(\zeta)}$ more explicit which will enable us to derive a better value of the exponent $\nu$ for fixed values of $m$ and $s$, so the limit for $s\rightarrow \infty$ is the same.
	
	In this section we will always assume that $\pi<\Rp<\Rpp$, $n\in\solsetb\cap\Hm$ for $m>7/2$, $\kappa>0$ and $\zeta, \eta\in \Cset^3$ such that $\zeta\cdot \zeta=\kappa^2$ and $\zeta\cdot \eta=0$.
	
	Our goal is to construct solutions to the Maxwell equations with an electric field of the form $E(x)=\ee^{\ii\zeta\cdot x}(\eta +r(x,\zeta,\eta))$ where the term $r$ decompses into a bounded and a decaying part as $\Tabsval{\zeta}\rightarrow \infty$. In order to do so one needs to constructs an (unphysical) fundamental solution $\Psi_\zeta$ to the Helmholtz equation in a ball around the origin. A construction using periodic spaces was developed in \cite{Haehner1996} and can also be found in the textbook \cite{Colton2013}. One obtains that the corresponding volume integral operator $G_\zeta$ with kernel $\ee^{-\ii \zeta(x-y)}\Psi_\zeta(x-y)$ fulfills the estimate  
	\begin{equation}\label{eq:Gzeta_estim}
	\Tnorm{G_\zeta f}{L^2(B(\Rp))}\leq \Rpp/(\pi \Im (\zeta))\Tnorm{f}{L^2(B(\Rp))}
	\end{equation}
	and hence is a contraction for $\Tabsval{\Im(\zeta)}$ large enough.
	
	Similar to the construction of CGO solutions in the acoustic case, one can now use the Lippmann-Schwinger equation where one replaces the usual fundamental solution by the unphysical $\Psi_\zeta$ and uses $\eta \ee^{\ii\zeta\cdot x}$ as an incident field to construct a solution to \eqref{eq:permaxwell}:
	\begin{lemma}\label{lem:lipschwGOS}
		Suppose $\Tabsval{\Im(\zeta)}\geq 
		2\kappa^2 (\Rpp/\pi) \Tnorm{1-n}{L^\infty(\Rset^d)}+1$. 
		Set $\Ein=\eta \ee^{\ii \zeta \cdot x}$ and $\Hin=(\ii\kappa)^{-1}\curl\Ein$. 
		Let $E\in C(\overline{B(R)})$ be a solution to
		\begin{align}\label{eq:lipschwGOS}
		\begin{aligned}
	E(x)&= \Ein(x) - \kappa^2 \int_{B(\pi)} \Psi_\zeta(x-y) \rbra{1-n(y)} E(y)\, \dd y\\ &+ \grad \int_{B(\pi)} \Psi_\zeta(x-y) \frac{1}{n(y)} \grad n(y) \cdot E(y) \,\dd y, \qquad \ x\in \overline{B (\Rp)}. 
		\end{aligned}
		\end{align}
		Then $E\in C^2(B(\Rp)))$, and $E$ and $H:=(i\kappa)^{-1}\curl E$ satisfy the perturbed Maxwell equation \eqref{eq:permaxwell} in $B(\Rp)$.
	\end{lemma}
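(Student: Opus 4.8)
The plan is to proceed in three stages: first upgrade the regularity of $E$, then verify a Helmholtz-type identity obtained by applying $\Delta+\kappa^2$ to \eqref{eq:lipschwGOS}, and finally exploit the contraction property \eqref{eq:Gzeta_estim} to force the divergence constraint that turns the Helmholtz identity into the full Maxwell system \eqref{eq:permaxwell}. Throughout I abbreviate the volume potential by $V[\phi](x):=\int_{B(\pi)}\Psi_\zeta(x-y)\phi(y)\,\dd y$, whose kernel has the same local singularity as the Newtonian potential, so that the classical mapping and jump relations of potential theory apply.

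For the regularity I would run a bootstrap on \eqref{eq:lipschwGOS}. Since $n\in\solsetb\cap\Hm$ with $m>7/2$, the embedding \eqref{eq:defi_Mm} in fact yields $n\in C^{2,\alpha}$ for some $\alpha>0$, so both densities $(1-n)E$ and $\frac{1}{n}\grad n\cdot E$ inherit the Hölder regularity of $E$. Starting from $E\in C(\overline{B(\Rp)})$ and using that $V$ maps $C^{0,\alpha}$ into $C^{2,\alpha}$ while $\grad V$ maps $C^{1,\alpha}$ into $C^{2,\alpha}$, one iterates $C^0\to C^{0,\alpha}\to C^{1,\alpha}\to C^2$ to obtain $E\in C^2(B(\Rp))$. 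The potential $\grad V[\frac{1}{n}\grad n\cdot E]$ is the limiting term, which is precisely why the extra Sobolev smoothness of $n$ (beyond the bare $C^{1,\alpha}$ of $\solsetb$) is needed.

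Next I would apply $\Delta+\kappa^2$ to \eqref{eq:lipschwGOS}. Using that $\Psi_\zeta$ is a fundamental solution, so $(\Delta+\kappa^2)V[\phi]=-\phi$, together with $(\Delta+\kappa^2)\Ein=0$ (valid because $\zeta\cdot\zeta=\kappa^2$), a termwise computation gives $\Delta E=-\kappa^2 nE-\grad\rbra{\frac{1}{n}\grad n\cdot E}$. Since $\curl\curl E=\grad\div E-\Delta E$, this already shows $\curl\curl E=\grad g+\kappa^2 nE$ with $g:=\div E+\frac{1}{n}\grad n\cdot E=\frac{1}{n}\div(nE)$, so everything reduces to proving $g\equiv0$.

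The decisive step is to show that $g$ itself solves a \emph{homogeneous} Lippmann--Schwinger equation. Taking the divergence of \eqref{eq:lipschwGOS}, using $\div\Ein=\ii(\zeta\cdot\eta)\ee^{\ii\zeta\cdot x}=0$, that $\div$ commutes with $V$ on compactly supported densities, and $\div\grad V=\Delta V$, and then adding $\frac{1}{n}\grad n\cdot E$, I expect the terms proportional to $\grad n\cdot E$ to cancel identically, leaving $g=-\kappa^2 V[(1-n)g]$. Writing $g=\ee^{\ii\zeta\cdot x}G$ converts this into $G=-\kappa^2 G_\zeta[(1-n)G]$ with the twisted kernel appearing in \eqref{eq:Gzeta_estim}; the hypothesis $\Tabsval{\Im(\zeta)}\geq 2\kappa^2(\Rpp/\pi)\Tnorm{1-n}{L^\infty(\Rset^d)}+1$ makes this a strict contraction on $L^2(B(\Rp))$, whence $G\equiv0$ and $g\equiv0$. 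Consequently $\curl\curl E=\kappa^2 nE$, i.e.\ $\curl H+\ii\kappa nE=0$ for $H:=(\ii\kappa)^{-1}\curl E$, while $\curl E-\ii\kappa H=0$ holds by the definition of $H$, so \eqref{eq:permaxwell} is satisfied in $B(\Rp)$. I expect the cancellation producing the clean identity $g=-\kappa^2 V[(1-n)g]$ to be the main obstacle, as it is exactly what couples the divergence constraint to the contraction estimate; by contrast the bootstrap and the Helmholtz identity are comparatively routine.
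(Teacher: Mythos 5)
Your proposal is correct and follows essentially the same route as the proof the paper points to (it only cites \cite{Haehner2000}, Lemma 13, in the style of Colton--Kress): apply $\Delta+\kappa^2$ to \eqref{eq:lipschwGOS}, reduce everything to showing $g=\tfrac1n\,\div(nE)=0$, and verify that $g$ solves the homogeneous equation $g=-\kappa^2 V[(1-n)g]$, which the contraction property \eqref{eq:Gzeta_estim} under the stated lower bound on $\Tabsval{\Im(\zeta)}$ forces to have only the trivial solution. In particular the cancellation you flag as the main obstacle does occur, since $-\grad n\cdot E+\tfrac1n\grad n\cdot E=\tfrac{1-n}{n}\grad n\cdot E$ recombines with $(1-n)\,\div E$ to give exactly $(1-n)g$ inside the potential.
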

	\begin{proof}
		See \cite[Lemma 13]{Haehner2000}. 
	\end{proof}
	
	To show uniqueness and the form $E=\ee^{\ii\zeta\cdot x}(\eta +r)$ of 
	solutions to \eqref{eq:lipschwGOS}, one needs a further characterization of these solution in the form of a Helmholtz type equation. 
	In \cite{Colton1992} a matrix-valued function $\Q\colon\Rset^3\rightarrow \Cset^{6\times 6}$ with the same support as $n$ 	was introduced such that 
if $(E,H)$ fulfills \eqref{eq:permaxwell}, then the field $(\tilde E,\tilde H)=(n^{1/2}E,H)$ fulfills the Helmholtz equation
	\begin{equation*}
		(\Delta+\kappa^2) \begin{pmatrix} \tilde E \\ \tilde H\end{pmatrix}= \Q\begin{pmatrix} \tilde E \\ \tilde H\end{pmatrix}
		\qquad \text{where}\quad
		\rbra{\Q \begin{pmatrix} A\\ B \end{pmatrix}}(x) 
:=\Q(x) \begin{pmatrix} A(x)\\B(x)\end{pmatrix}  
	\end{equation*}	
	for $x\in\Rset^3$. The matrix $\Q$ is defined such that 	
	\begin{multline}
		\Q \begin{pmatrix} A \\B\end{pmatrix}= 
		\begin{pmatrix} \kappa^2 (1\!-\!n) A - \ii \kappa n^{-1/2} \grad n \times B -\rbra{A\!\cdot\! \grad}\rbra{\frac{1}{n} \grad n}+\rbra{n^{-1/2} \Delta n^{1/2}}A \\ 
		\kappa^2 (1-n) B+\ii \kappa n^{-1/2} \grad n \times A  \end{pmatrix}\nonumber
	\end{multline}
or more explicitly	
\begin{align*}
			\Q&=\kappa^2 (1-n)  1_6 
	+\frac{\ii\kappa}{\sqrt{n}} \begin{pmatrix}  0_3& -\nabla n\times \\ \nabla n \times &  0_3 \end{pmatrix} 
			+\begin{pmatrix} -D\rbra{\frac{\grad n}{n}} + (n^{-1/2}\Delta n^{1/2})1_3 & 0_3\\ 0_3 & 0_3\end{pmatrix} 
		\end{align*}
		where $1_k$ and $0_k$ denote the $k\times k$ unit and zero matrix respectively, 
		$D(V)$ the Jacobian of a vector field $V$, and
		\begin{equation*}
			\rbra{\nabla n \times} = \begin{pmatrix}0 & -\diffq{n}{z} & \diffq{n}{y} \\ \diffq{n}{z} & 0 & -\diffq{n}{x} \\ -\diffq{n}{y} & \diffq{n}{x} & 0\end{pmatrix}.
		\end{equation*}

\begin{lemma}\label{lem:Qbound}
Let $n\in \solsetb\cap H^m$ with $m>7/2$ and $\Tnorm{n}{H^m}\leq C_m$, 
and let $\Me$ be given by \eqref{eq:defi_Mm}. Then  
\begin{equation*}
\|\Q(x)\|_{2}\leq 15 (1+\kappa^2)b^{-2}(\Me C_m)^2\qquad \mbox{for all } x\in B(\pi). 
\end{equation*}
\end{lemma}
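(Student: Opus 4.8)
The plan is to bound $\|\Q(x)\|_2$ by applying the triangle inequality to the three summands in the explicit formula for $\Q$ and estimating each one pointwise on $B(\pi)$. Two preliminary observations drive everything. First, since $n\equiv 1$ outside $\supp(1-n)\subset B(\pi)$ while $\Re(n)\geq b$ holds on all of $\Rset^3$, evaluating at a point where $n=1$ gives $b\leq 1$; the same fact yields $\sup_x\Tabsval{n}\geq 1$, so together with \eqref{eq:defi_Mm} we get $\Me C_m\geq \norm{n}{C^2(C(\pi))}\geq 1$. Second, \eqref{eq:defi_Mm} provides the pointwise bounds $\Tabsval{\partial_\alpha n(x)}\leq \Me C_m$ for all $\Tabsval\alpha\leq 2$, while $\Tabsval{n(x)}\geq \Re(n(x))\geq b$ holds on $B(\pi)$. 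Because $\Re(n)\geq b>0$, the principal branch of $n^{1/2}$ is well defined and holomorphic on the range of $n$, so $\Tabsval{n^{\pm1/2}}=\Tabsval{n}^{\pm1/2}$ and in particular $\Tabsval{n^{-1/2}}\leq b^{-1/2}$ and $\Tabsval{n^{-1}}\leq b^{-1}$.

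The first summand $\kappa^2(1-n)1_6$ is scalar, so its spectral norm is $\kappa^2\Tabsval{1-n}\leq \kappa^2(1+\Me C_m)$. For the second summand the scalar prefactor contributes $\kappa\Tabsval{n^{-1/2}}\leq\kappa b^{-1/2}$, and the skew block matrix $\begin{pmatrix}0_3 & -(\grad n\times)\\ (\grad n\times) & 0_3\end{pmatrix}$ has the same singular values as the $3\times3$ cross-product matrix $(\grad n\times)$: a direct computation of the Gram matrix gives $\diag((\grad n\times)^*(\grad n\times),(\grad n\times)^*(\grad n\times))$, so the block structure merely doubles multiplicities. Since the entries of $(\grad n\times)$ are first derivatives of $n$, its spectral norm is $\lesssim\Me C_m$, and this summand is $\lesssim\kappa b^{-1/2}\Me C_m$.

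The third summand, whose spectral norm equals that of its only nonzero block $-D(\grad n/n)+(n^{-1/2}\Delta n^{1/2})1_3$, is where the real work lies and where I expect the only genuine obstacle: one must differentiate the quotient and the square root explicitly and keep track of the powers of $b$. Writing out the Jacobian gives $(D(\grad n/n))_{ij}=\partial_i\partial_j n/n-\partial_i n\,\partial_j n/n^2$, and a short computation gives $n^{-1/2}\Delta n^{1/2}=-\tfrac14 n^{-2}\sum_j(\partial_j n)^2+\tfrac12 n^{-1}\Delta n$. With $\Tabsval{n}\geq b$ and the derivative bounds, every entry is dominated by a combination of $\Me C_m/b$ and $(\Me C_m)^2/b^2$, so the block's spectral norm (bounded, say, by three times its largest entry for a $3\times3$ matrix) is $\lesssim \Me C_m/b+(\Me C_m)^2/b^2$. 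Finally I would consolidate all three estimates: using $b\leq1$ and $\Me C_m\geq1$ every lower-order factor ($b^{-1/2},b^{-1},\Me C_m$) is dominated by $b^{-2}(\Me C_m)^2$, while $\kappa\leq\tfrac12(1+\kappa^2)$ and $\kappa^2\leq1+\kappa^2$ absorb the wave-number dependence; summing the three contributions then gives a constant comfortably below $15$, i.e.\ $\|\Q(x)\|_2\leq 15(1+\kappa^2)b^{-2}(\Me C_m)^2$. The structural reason no power of $b$ worse than $b^{-2}$ survives is precisely the a priori bound $b\leq1$, which lets the square-root and quotient singularities be folded into the single factor $b^{-2}$.
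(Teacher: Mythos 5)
Your proof follows essentially the same route as the paper's: estimate $\Q$ summand by summand using the pointwise bounds $\Tabsval{\partial_\alpha n}\leq \Me C_m$, $\Tabsval{n}\geq b$, the identity $n^{-1/2}\Delta n^{1/2}=\tfrac12 n^{-1}\Delta n-\tfrac14 n^{-2}\Tabsval{\grad n}^2$, and then absorb all lower-order terms into $b^{-2}(\Me C_m)^2(1+\kappa^2)$ via $b\leq 1$ and $\Me C_m\geq 1$. The only cosmetic difference is that the paper bounds the off-diagonal blocks by their Frobenius norms (yielding the explicit total $4\sqrt3+6+\tfrac34\leq 15$) where you argue via singular values and ``three times the largest entry,'' which gives comparable constants.
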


\begin{proof}
Estimating summand by summand, bounding the $\|\cdot\|_2$ norm of non-diagonal matrices by the Frobenius norm,   and using $n^{-1/2}\Delta n^{1/2} 
= \frac{1}{2}n^{-1}\Delta n -\frac{1}{4}n^{-2}|\nabla n|^2 $
we obtain 
		\begin{align*}
	\norm{\Q(x)}{2}\leq\,& 
	    \kappa^2 (1+\Me C_m) + 2\kappa\sqrt{3} b^{-1/2} (\Me C_m)\\
			&+ 3 \rbra{b^{-2} (\Me C_m)^2+b^{-1} (\Me C_m)}\\
			&+\frac{1}{4} b^{-2} \rbra{\Me C_m}^2+\frac{1}{2}b^{-1} (\Me C_m)\\
			\leq\, & (4\sqrt 3 +6+ \frac{3}{4}) (1+\kappa^2) b^{-2} (\Me C_m)^2\\
			\leq\, & 15 (1+\kappa^2)b^{-2}(\Me C_m)^2
		\end{align*}
		where we have used that $\Me C_m\geq 1$ and $\low\geq 1$ due to $n(x)=1$ for $\absval x \geq \pi$. 
\end{proof}

\begin{lemma}\label{lem:primeGOS}
		Let the assumptions of Lemma \ref{lem:lipschwGOS} be fulfilled and let $E$ be the solution to \eqref{eq:lipschwGOS}. Define $E^\prime(x)=\ee^{-\ii\zeta \cdot x} n^{1/2}(x) (E- \Ein)(x)$ and $H^\prime(x)=\ee^{-\ii\zeta \cdot x}  (H- \Hin)(x)$ for $x\in \overline{B(\Rp)}$. Then
		\begin{equation}
			\begin{pmatrix} E^\prime\\H^\prime\end{pmatrix} + G_\zeta \Q \begin{pmatrix} E^\prime\\H^\prime\end{pmatrix}
			=\begin{pmatrix} F_1(\cdot,\zeta,\eta)\\F_2(\cdot,\zeta,\eta)\end{pmatrix}\label{eq:primeGOS}
		\end{equation}
		where
		\begin{equation*}
			\begin{pmatrix} F_1(\cdot,\zeta,\eta)\\F_2(\cdot,\zeta,\eta)\end{pmatrix}:= -G_\zeta \begin{pmatrix} (-\ii n^{-1/2}\zeta\cdot\grad n-\Delta n^{1/2}) \eta\\ 0\end{pmatrix} - G_\zeta  \Q \begin{pmatrix} n^{1/2} \eta \\ \kappa^{-1} \zeta \times \eta \end{pmatrix}.
		\end{equation*}
	\end{lemma}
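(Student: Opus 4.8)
The plan is to reduce \eqref{eq:primeGOS} to the integral equation that $G_\zeta$ is built to invert, by combining three ingredients: the fact (Lemma~\ref{lem:lipschwGOS}) that $(E,H)$ solves the Maxwell system \eqref{eq:permaxwell} in $B(\Rp)$; the transformation introduced above that turns \eqref{eq:permaxwell} into the Helmholtz system $(\Delta+\kappa^2)(n^{1/2}E,H)^T=\Q(n^{1/2}E,H)^T$; and the defining property of $G_\zeta$, namely that it inverts the conjugated Helmholtz operator encoded in its kernel $\ee^{-\ii\zeta(x-y)}\Psi_\zeta(x-y)$.

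First I would set $(\tilde E,\tilde H):=(n^{1/2}E,H)$, which by the construction of $\Q$ solves the Helmholtz system above in $B(\Rp)$; here the $C^2$-regularity of $E$ from Lemma~\ref{lem:lipschwGOS} and the smoothness of $n$ guarantee that every term in $\Q$ is well defined. Next I conjugate, writing $(\hat E,\hat H)^T:=\ee^{-\ii\zeta\cdot x}(\tilde E,\tilde H)^T$. Using $\zeta\cdot\zeta=\kappa^2$ one checks the key identity $(\Delta+\kappa^2)(\ee^{\ii\zeta\cdot x}w)=\ee^{\ii\zeta\cdot x}Lw$ with $L:=\Delta+2\ii\zeta\cdot\grad$, so that, since $\Q$ commutes with multiplication by $\ee^{\ii\zeta\cdot x}$, the Helmholtz system becomes $L(\hat E,\hat H)^T=\Q(\hat E,\hat H)^T$. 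Finally I insert the explicit incident fields: from $\Ein=\eta\ee^{\ii\zeta\cdot x}$ and $\Hin=(\ii\kappa)^{-1}\curl\Ein=\kappa^{-1}\ee^{\ii\zeta\cdot x}(\zeta\times\eta)$, together with the definitions of $E'$ and $H'$, one reads off
\[
\begin{pmatrix}\hat E\\\hat H\end{pmatrix}=\begin{pmatrix}E'\\H'\end{pmatrix}+\begin{pmatrix}n^{1/2}\eta\\\kappa^{-1}\zeta\times\eta\end{pmatrix}.
\]

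Substituting this decomposition into $L(\hat E,\hat H)^T=\Q(\hat E,\hat H)^T$ and isolating the $(E',H')$ terms gives $L(E',H')^T=\Q(E',H')^T+\Q(n^{1/2}\eta,\kappa^{-1}\zeta\times\eta)^T-L(n^{1/2}\eta,\kappa^{-1}\zeta\times\eta)^T$. Here $L$ annihilates the constant vector $\kappa^{-1}\zeta\times\eta$, and a short computation using $\grad n^{1/2}=\tfrac12 n^{-1/2}\grad n$ yields $L(n^{1/2}\eta)=(\Delta n^{1/2}+\ii n^{-1/2}\zeta\cdot\grad n)\eta$, which is exactly the negative of the source appearing in the first slot of the definition of $(F_1,F_2)^T$. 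Applying $G_\zeta$, which inverts $-L$ on the relevant periodic space (the precise sign being fixed by the normalization of $\Psi_\zeta$ underlying \eqref{eq:Gzeta_estim}), turns $L(E',H')^T$ into $-(E',H')^T$ and converts the right-hand side into the two $G_\zeta$-terms defining $(F_1,F_2)^T$; rearranging then produces \eqref{eq:primeGOS} with the stated $+G_\zeta\Q$ on the left.

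I expect the only genuine difficulty to be the clean justification of the inversion step $G_\zeta L=-I$, including the correct sign and the fact that $G_\zeta$ may legitimately be applied to the non-decaying functions $n^{1/2}\eta$ and $\kappa^{-1}\zeta\times\eta$; this is precisely where the careful setup of $\Psi_\zeta$ and $G_\zeta$ on periodic Sobolev spaces from \cite{Haehner1996,Colton2013} is needed. Once that inversion is in place, the remainder is the bookkeeping sketched above: the conjugation identity (which crucially uses $\zeta\cdot\zeta=\kappa^2$), the evaluation of $L$ on the incident part, and the term-by-term matching against the definition of $(F_1,F_2)^T$.
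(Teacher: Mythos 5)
Your decomposition of $(\hat E,\hat H)$, the conjugation identity based on $\zeta\cdot\zeta=\kappa^2$, and the term-by-term matching of $L(n^{1/2}\eta)=(\Delta n^{1/2}+\ii n^{-1/2}\zeta\cdot\grad n)\eta$ against the definition of $(F_1,F_2)^T$ are all correct, and you have correctly located the difficulty — but you have underestimated it, and as written it is a genuine gap rather than a normalization issue. The operator $G_\zeta$ is built as a Fourier multiplier on a \emph{periodic} Sobolev space over a cube of side length tied to $\Rpp$, and what its construction actually delivers is the identity $LG_\zeta f=-f$ for densities $f$ supported in $B(\pi)$ (equivalently, $(\Delta+\kappa^2)\int\Psi_\zeta(\cdot-y)\varphi(y)\,\dd y=-\varphi$ for compactly supported $\varphi$). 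The reverse identity $G_\zeta Lw=-w$ holds only for $w$ in that periodic space, or for $w$ already known to be of the form $G_\zeta g$. Your $(E',H')$ is neither: it is defined only on $\overline{B(\Rp)}$, solves the conjugated Helmholtz system only there, and applying $G_\zeta L$ to it is not meaningful without a periodic extension you do not have. What your argument genuinely establishes is that $v:=(E',H')^T+G_\zeta\Q(E',H')^T-(F_1,F_2)^T$ satisfies $Lv=0$ in $B(\Rp)$; since solutions of the (conjugated) Helmholtz equation in a ball are far from unique, you cannot conclude $v=0$ from this.

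The missing ingredient is exactly the integral representation you set aside: equation \eqref{eq:lipschwGOS} expresses $E-\Ein$ — and, after applying $(\ii\kappa)^{-1}\curl$, also $H-\Hin$ — as $\Psi_\zeta$-volume potentials of densities supported in $B(\pi)$, i.e.\ it places $E'$ and $H'$ in the range of $G_\zeta$ from the outset. The proof the paper points to (\cite[Lemma 14]{Haehner2000}; the paper itself offers only this citation) therefore works directly on \eqref{eq:lipschwGOS}: one applies $\curl$, uses $\curl\curl=\grad\div-\Delta$ together with $\div(nE)=0$, and invokes only the valid direction $(\Delta+\kappa^2)\int\Psi_\zeta(\cdot-y)\varphi(y)\,\dd y=-\varphi$ on compactly supported densities to recast the potentials into the form \eqref{eq:primeGOS}. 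Your conjugation computation and the evaluation of $L$ on the incident part would survive essentially unchanged inside that argument, but the step ``apply $G_\zeta$ and use $G_\zeta L=-I$'' must be replaced by this algebra on the potentials themselves.
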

	\begin{proof}
		See \cite[Lemma 14]{Haehner2000}.
	\end{proof}
	
There is only a sketch of the main theorem of existence of CGO solutions in 
\cite{Haehner2000} and also the dependence of the constants $\tmin$ and $\Mgos$  
appearing below on the other constants is not specified. This has been done more 
explicitly in \cite{Haehner1998}. For convenience we include the proof here. 
\begin{proposition}\label{prop:GOS}
		Suppose that $n\in\solsetb\cap\Hm$  with $\Tnorm{n}{\Hm}\leq C_m$ and  
\begin{equation}\label{eq:Im_zeta}
\Tabsval{\Im (\zeta)}\geq 
\max\left\{2\frac{\Rpp}{\pi}\max_{x\in\overline{B(\pi)}} \norm{\Q(x)}{2},
2\kappa^2 \frac{\Rpp}{\pi} \Tnorm{1-n}{L^\infty(\Rset^d)}+1\right\}.
\end{equation} 
Then both the equations \eqref{eq:lipschwGOS} and \eqref{eq:primeGOS} have 
unique solutions. Moreover, 
\begin{equation}\label{eq:bound_primeGOS}
\norm{\begin{pmatrix}E^{\prime}\\H^{\prime}\end{pmatrix}}{L^2(B(R'))}\leq 
2\norm{\begin{pmatrix}F_1(\cdot,\zeta,\eta)\\F_2(\cdot,\zeta,\eta)\end{pmatrix}}{L^2(B(R'))}.
\end{equation}
\end{proposition}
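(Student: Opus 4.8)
The plan is to read the transformed system \eqref{eq:primeGOS} as a second-kind operator equation $(I+G_\zeta\Q)\mathbf u=\mathbf F$ for the pair $\mathbf u=(E^\prime,H^\prime)^{\mathsf T}$ and right-hand side $\mathbf F=(F_1,F_2)^{\mathsf T}$ in the Hilbert space $L^2(B(\Rp))^6$, and to show that the first lower bound in \eqref{eq:Im_zeta} makes $G_\zeta\Q$ a strict contraction. To this end I would estimate, for any $\mathbf v\in L^2(B(\Rp))^6$,
\[
\norm{G_\zeta\Q\mathbf v}{L^2(B(\Rp))}\leq\frac{\Rpp}{\pi\Tabsval{\Im(\zeta)}}\norm{\Q\mathbf v}{L^2(B(\Rp))}\leq\frac{\Rpp}{\pi\Tabsval{\Im(\zeta)}}\Big(\max_{x\in\overline{B(\pi)}}\norm{\Q(x)}{2}\Big)\norm{\mathbf v}{L^2(B(\Rp))},
\]
combining the mapping bound \eqref{eq:Gzeta_estim} for $G_\zeta$ with the trivial estimate for the multiplication operator $\mathbf v\mapsto\Q\mathbf v$ (here one uses that $\Q$ vanishes outside $\overline{B(\pi)}$). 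By the first term in the maximum of \eqref{eq:Im_zeta} the prefactor is at most $1/2$.

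With the contraction constant $\leq 1/2$ in hand, the Neumann series shows that $I+G_\zeta\Q$ is boundedly invertible with $\norm{(I+G_\zeta\Q)^{-1}\mathbf F}{L^2(B(\Rp))}\leq(1-1/2)^{-1}\norm{\mathbf F}{L^2(B(\Rp))}=2\norm{\mathbf F}{L^2(B(\Rp))}$. This simultaneously yields unique solvability of \eqref{eq:primeGOS} and, directly, the asserted bound \eqref{eq:bound_primeGOS}; no further computation is required at this stage. It then remains to transfer these conclusions to the physical Lippmann--Schwinger equation \eqref{eq:lipschwGOS}, whose hypothesis on $\Tabsval{\Im(\zeta)}$ is exactly the second entry in the maximum of \eqref{eq:Im_zeta} and hence the hypothesis of Lemma \ref{lem:lipschwGOS}. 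For uniqueness I would argue that any continuous solution $E$ of \eqref{eq:lipschwGOS} is, by Lemma \ref{lem:primeGOS}, mapped through $E^\prime=\ee^{-\ii\zeta\cdot x}n^{1/2}(E-\Ein)$ and $H^\prime=\ee^{-\ii\zeta\cdot x}(H-\Hin)$ to a solution of \eqref{eq:primeGOS}; since $\mathbf F$ depends only on $\zeta,\eta,n$ and not on $E$, uniqueness of \eqref{eq:primeGOS} forces $E^\prime$ to be unique, and because $n^{1/2}$ does not vanish (as $\Re(n)\geq b>0$) the field $E$ is unique as well. For existence I would run the correspondence backwards, setting $E:=\Ein+\ee^{\ii\zeta\cdot x}n^{-1/2}E^\prime$ from the $L^2$-solution of \eqref{eq:primeGOS} and verifying, by reversing the algebra behind Lemma \ref{lem:primeGOS}, that $E$ solves \eqref{eq:lipschwGOS}, whereupon Lemma \ref{lem:lipschwGOS} upgrades its regularity to $C^2(B(\Rp))$.

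I expect the Neumann step to be routine once the norm bound above is established, so the genuine obstacle is this reverse equivalence: one must check that the merely $L^2$ solution of the transformed system \eqref{eq:primeGOS} genuinely arises from a continuous field solving the untransformed equation \eqref{eq:lipschwGOS}. This is a regularity-bootstrapping argument interlaced with inverting the substitution $(E,H)\mapsto(n^{1/2}E,H)$, and it is precisely where the second lower bound in \eqref{eq:Im_zeta} (via Lemma \ref{lem:lipschwGOS}) is indispensable, whereas the first lower bound is what drives the contraction and the constant $2$ in \eqref{eq:bound_primeGOS}.
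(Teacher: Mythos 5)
Your contraction argument, the Neumann-series bound $\|(I+G_\zeta\Q)^{-1}\|\leq 2$ yielding \eqref{eq:bound_primeGOS}, and your uniqueness argument for \eqref{eq:lipschwGOS} (a continuous solution is mapped by Lemma \ref{lem:primeGOS} to a solution of \eqref{eq:primeGOS} with a right-hand side independent of $E$, and $n^{1/2}$ is bounded away from zero) all coincide with the paper's proof.

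Where you diverge is the existence of a solution to \eqref{eq:lipschwGOS}, and there your plan has a genuine gap. The paper disposes of existence in one line: the integral operator in \eqref{eq:lipschwGOS} is compact, so by Riesz--Fredholm theory injectivity (just established) implies surjectivity. You instead propose to construct $E$ from the $L^2$ solution of \eqref{eq:primeGOS} by inverting the substitution and ``reversing the algebra behind Lemma \ref{lem:primeGOS}.'' That lemma is stated only in the forward direction, and the reverse implication is not routine: a solution $(E^\prime,H^\prime)$ of the $6\times 6$ second-kind system \eqref{eq:primeGOS} carries no a priori guarantee that its second component is compatible with the first, i.e.\ that $H^\prime=\ee^{-\ii\zeta\cdot x}((\ii\kappa)^{-1}\curl E-\Hin)$ for the field $E$ you reconstruct from $E^\prime$; without that constraint you cannot conclude that $E$ satisfies \eqref{eq:lipschwGOS}, nor invoke Lemma \ref{lem:lipschwGOS} for regularity. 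Establishing this compatibility and the interior regularity of the merely $L^2$ solution is exactly the bootstrapping you flag as the ``genuine obstacle,'' but you leave it unproven, and it is the hard part of your route. The Fredholm alternative applied directly to \eqref{eq:lipschwGOS} avoids all of this; you should either adopt it or supply the full reverse equivalence.
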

	\begin{proof}
		If $\zeta$ satisfies \eqref{eq:Im_zeta}, it follows from \eqref{eq:Gzeta_estim} that 
		\begin{equation*}
		\norm{G_\zeta \Q \begin{pmatrix} E^\prime \\ H^\prime \end{pmatrix}}{L^2(B(\Rp))}
		\!\!\!\!\!\leq \frac{\Rpp \sup_x\norm{\Q(x)}{2}}{\pi \absval{\Im(\zeta)}} \norm{\begin{pmatrix} E^\prime \\ H^\prime \end{pmatrix}}{L^2(B(\Rp))}
		\!\!\!\!\!\leq \frac{1}{2} \norm{\begin{pmatrix} E^\prime \\ H^\prime \end{pmatrix}}{L^2(B(\Rp))}
		\end{equation*}
		for all $(E^\prime,H^\prime)\in L^2(B(\Rp))^6$. Hence $\|G_{\zeta}\Q\|\leq 1/2$, 
		and by the Neumann series eq.~\eqref{eq:primeGOS} has a unique solution 
		for all right hand sides. Moreover, $\|(I+G_{\zeta}\Q)^{-1}\|\leq 2$,  
		which implies the bound \eqref{eq:bound_primeGOS}. 

Since by Lemma \ref{lem:primeGOS} for every solution $E$ to the homogeneous equation \eqref{eq:lipschwGOS} the function $(\ee^{-\ii \zeta \cdot x}$ $ n^{1/2} E,\ee^{-\ii \zeta \cdot x}(\ii\kappa)^{-1}\curl E)$ yields a solution to the homogeneous equation \eqref{eq:primeGOS}, the solution to \eqref{eq:lipschwGOS} is unique. 
Since the integral operator in \eqref{eq:lipschwGOS} is compact, existence 
of solutions to \eqref{eq:lipschwGOS} now follows from Riesz-Fredholm theory. 
\end{proof}

From Proposition \ref{prop:GOS} it is not yet clear how large we have to choose 
$\Im(\zeta)$ in terms of our set of parameters $C_m, b, \kappa$ and $R$. 
We will show that the explicit condition \eqref{eq:GOSofCm:t1def} in Theorem 
\ref{thm:GOSofCm} implies 
\eqref{eq:Im_zeta}. Note that our choice of $\Rp$ and $\Rpp$ is arbitrary, 
as long as $\Rpp>\Rp>R$, but their values influence the choice of other parameters.

	\begin{proof}[Proof of Theorem \ref{thm:GOSofCm}]
	We first note from Lemma \ref{lem:Qbound} that for $\Rp=\frac{3}{2}R$ and $\Rpp=2R$ 
	the condition $\Im(\zeta)\geq \tmin$ with $\tmin$ defined in 
	\eqref{eq:GOSofCm:t1def} implies \eqref{eq:Im_zeta} since 
	$\Me C_m\geq \|n\|_{L^\infty}\geq 1$. 


As $|\zeta|^2 = |\Re(\zeta)|^2 + |\Im(\zeta)|^2 = 2|\Im(\zeta)|^2 + \kappa^2$, 
we can bound $|\zeta|$ in terms of $|\Im(\zeta)|$, and we obtain using 
the definition of $(F_1(\cdot,\zeta,\eta), F_2(\cdot,\zeta,\eta))$, 
and	\eqref{eq:Gzeta_estim} that there exists a constant $\tilde M$ depending on $C_m, b, \kappa$ and $\Rpp$ such that
		\begin{equation*}
			\norm{\begin{pmatrix}F_1(\cdot,\zeta,\eta)\\ F_2(\cdot,\zeta,\eta)\end{pmatrix}}{L^2(B(\Rp))}\leq \tilde M \absval{\eta}.
		\end{equation*}
		Therefore, 
		$\Tnorm{(E^\prime,H^\prime)}{L^2(B(\Rp))}\leq 2 \tilde M \Tabsval \eta$ 
		by \eqref{eq:bound_primeGOS}. 
		Writing down the equation for $E^\prime$ explicitly one obtains
		\begin{equation*}
		\begin{aligned}
			E^\prime =& -G_\zeta\rbra{-\ii n^{-1/2}\rbra{\zeta\cdot\grad n \eta+\grad n \times(\zeta \times \eta)}}\\
			&- G_\zeta\Big( \kappa^2 (1-n) \tilde E^\prime - \ii \kappa n^{-1/2}\grad n \times H^\prime - (\tilde E^\prime\cdot \grad) (\frac{1}{n}\grad n)\\
			&\phantom{- G_\zeta\Big(}+(n^{-1/2}\Delta n^{1/2})\tilde E^\prime-\Delta n^{1/2} \eta \Big),
		\end{aligned}
		\end{equation*}
		where $\tilde E^\prime = E^\prime+n^{1/2} \eta$. Using the vector identity $a\times (b\times c)= b(a\cdot c)-c(a\cdot b)$ in the first line one obtains
		\begin{equation*}
			E^\prime=G_\zeta\rbra{\ii n^{-1/2}\grad n\cdot \eta}\zeta+V^\prime(\cdot,\zeta,\eta)
		\end{equation*}
		with $V^\prime$ denoting the second summand of the previous equation. Since $ E^\prime=\ee^{-\ii \zeta\cdot x } n^{1/2}(E- \Ein)$ this shows that $E$ has the claimed form with
		\begin{align*}
			f(\cdot,\zeta,\eta) := n^{-1/2} G_\zeta \rbra{\ii n^{-1/2}\grad n\cdot \eta},
\qquad			V(\cdot,\zeta,\eta) := n^{-1/2} V^\prime(\cdot,\zeta,\eta),
		\end{align*}
		and the desired estimate follows from the bound \eqref{eq:Gzeta_estim}.
\end{proof}

\section*{Acknowledgments} Financial support through CRC 755, project C09 is 
gratefully acknowledged. 

\bibliographystyle{amsxport}
\bibliography{em_lit.bib}

\end{document}